\documentclass[11pt]{amsproc}
\usepackage{amsmath,amssymb,amsthm,framed,cancel,tikz}
\usetikzlibrary{arrows, decorations.markings}
\tikzstyle{vecArrow} = [thick, decoration={markings,mark=at position
   1 with {\arrow[semithick]{open triangle 60}}},
   double distance=1.4pt, shorten >= 5.5pt,
   preaction = {decorate},
   postaction = {draw,line width=1.4pt, white,shorten >= 4.5pt}]
\tikzstyle{innerWhite} = [semithick, white,line width=1.4pt, shorten >= 4.5pt]

\newtheorem{theorem}{Theorem}[section]
\newtheorem{lemma}[theorem]{Lemma}

\newtheorem{proposition}[theorem]{Proposition}
\newtheorem{corollary}[theorem]{Corollary}
\newtheorem{remark}[theorem]{Remark}

\theoremstyle{definition}
\newtheorem{definition}[theorem]{Definition}

\newtheorem*{question}{Question}
\theoremstyle{remark} 
\theoremstyle{remark}\newtheorem*{case}{Case} 
\theoremstyle{remark}
\newcommand{\N}{\mathbb{N}}
\newcommand{\R}{\mathbb{R}}

\numberwithin{equation}{section}

%    Absolute value notation

%    Blank box placeholder for figures (to avoid requiring any
%    particular graphics capabilities for printing this document).

\begin{document}
\title[On operators on $C_0(\alpha\times L)$]{On operators on $C_0(\alpha\times L)$ under the Ostaszewski's $\clubsuit$-principle}
\date{ }
\author{Leandro Candido}
\address{Universidade Federal de S\~ao Paulo - UNIFESP. Instituto de Ci\^encia e Tecnologia. Departamento de Matem\'atica. S\~ao Jos\'e dos Campos - SP, Brasil}
\email{\texttt{leandro.candido@unifesp.br}}
\thanks{ The author was supported by Funda\c c\~ao de Amparo \`a Pesquisa do Estado de S\~ao Paulo - FAPESP No. 2016/25574-8 }

%    General info
\subjclass{Primary 46E15,54G12; Secondary 46B25, 03E65,54B10}
%\date{January 1, 1994 and, in revised form, June 22, 1994.}

%\dedicatory{This paper is dedicated for those who will never have a paper dedicated for.}

\keywords{Geometry of Banach Spaces $C(K)$, Operators on Banach spaces $C(K)$, scattered spaces.}

\begin{abstract} 
For an exotic locally compact Hausdorff space $L$, cons\-truc\-ted under the assumption of the Ostaszewski's $\clubsuit$-principle, 
and a countable ordinal space $\alpha$, we prove that all operators defined on $C_0(\alpha\times L)$ are as simple as possible. We also investigate the geometry of such space $C_0(\alpha\times L)$ and we classify up to isomorphisms all its complemented subspaces.
\end{abstract}

\maketitle

\section{Basic terminology and notation}

The letters $K$ and $L$ indexed or not, unless explicitly stated otherwise,  will always stand for a Hausdorff compactum and a locally compact Hausdorff space respectively. We write $K=L\cup \{\infty\}$ if $K$ is the Aleksandrov one-point compactification of $L$. $C(K)$ is the Banach space of all continuous functions $f: K \to \mathbb{R}$, with the norm: $\|f\| = \sup_{x\in K} \|f(x)\|$. If $K=L\cup \{\infty\}$, $C_0(L)$ is the subspace of $C(K)$ consisting of all functions $f$ which vanish at infinity, that is, $f(\infty)=0$.

For spaces $K_1=L_1\cup \{\infty_1\}$ and $K_2=L_2\cup \{\infty_2\}$, their product $K_1\times K_2$ is endowed with the usual product topology. If $\partial(K_1\times K_2)=(\{\infty_1\}\times K_2)\cup (K_1 \times \{\infty_2\})$, then $C_0(L_1\times L_2)$ can be  isometrically identified with the subspace of $C(K_1\times K_2)$ of all functions that vanish at $\partial(K_1\times K_2)$. The dual space $C(K_1\times K_2)^*$ is identified, via Riesz representation theorem, with $M(K_1\times K_2)$, the space of all signed Radon measures on $K_1\times K_2$ of finite variation endowed with the variation norm. For every measure $\mu$, $|\mu|$ denotes its variation. It is standard to check that the dual space $C_0(L_1\times L_2)^*$ can be identified with the subspace of $M(K_1\times K_2)$ of all measures $\mu$ such that $|\mu|(\partial(K_1\times K_2))=0$.
 
For any given $f:K_1\to \mathbb{R}$ and $g:K_2\to \mathbb{R}$, $f\otimes g:K_1\times K_2\to \mathbb{R}$ is the function given by $f\otimes g(x,y)= f(x)\cdot g(y)$. For subspaces $X\subset C_0(L_1)$ and $Y\subset C_0(L_2)$, $X\otimes_{\varepsilon}Y$ is the linear span of the set $\{f\otimes g: f \in X, g \in Y\}$ endowed with the norm from $C_0(L_1\times L_2)$. Its closure in $C_0(L_1\times L_2)$, denoted by $X\widehat{\otimes}_{\varepsilon}Y$, is the injective tensor product of $X$ and $Y$. In particular, see \cite[\S 20]{Se}, $C_0(L_1)\widehat{\otimes}_{\varepsilon}C_0(L_1)=C_0(L_1\times L_2)$.
%
%For a Banach space $X$ and  $n \in \mathbb{N}$ we denote $X^n$ the direct sum of $n$ copies of $X$, that is, 
%$X^n=X\oplus\stackrel{n}{\ldots}\oplus X$.

Lower case Greek letters will denote ordinal numbers, in particular, $\omega$ is the least infinite ordinal 
and $\omega_1$ is the least uncountable ordinal. By abuse of notation, we will also denote the least infinite cardinal by $\omega$ and the least uncountable cardinal as $\omega_1$. An ordinal $\alpha$, as a topological space, is always endowed with its usual order topology. But $c_0(\omega_1)$ will denote the space $C_0(\varGamma)$ where $\varGamma$ is a discrete space of cardinality $|\varGamma|=\omega_1$. The ordinal spaces $\alpha$ and $\alpha+1$ can also be denoted by $[0,\alpha)$ and $[0,\alpha]$ respectively.   

For a topological space $K$, $K^{(1)}$ is the set of all its non-isolated points. For an ordinal number $\rho$, the $\rho$-Cantor-Bendixson derivative $K^{(\rho)}$ is defined as follows:  $K^{(0)}=K$ and 
\begin{displaymath}
K^{(\rho)}= \left\{
\begin{array}{ll}
\left(K^{(\gamma)}\right)^{(1)} & \text{ if }\rho=\gamma+1;\\
\bigcap_{\gamma<\rho}K^{(\gamma)} & \text{ if }\rho\text{ is a limit ordinal}.
\end{array} \right.
\end{displaymath}

We recall that a topological space $K$ is scattered if every nonempty subset $A \subset K$ has an isolated point in $A$. In this case, there will be an ordinal number $\rho$ such that $K^{(\rho)}=\emptyset$ and we call the least such ordinal as the \emph{height} of $K$.

If $K$ is a scattered compactum, then $M(K)$ is isometrically isomorphic to $\ell_1(K)$ (see \cite{Rud}). In this case, for each $\mu \in M(K)$, we will denote by $\mathrm{supp}(\mu)$ the countable set $\{z\in K:\mu(\{z\})\neq 0\}$ (the support of a function $f:L\to \R$, that is,  the set $\{x\in L:f(x)\neq 0\}$, will also be denoted by $\mathrm{supp}(f)$).

For all Banach spaces $X$ and $Y$, we say that $T:X\to Y$ is an operator if it is linear and continuous. If $T$ is an isomorphism, we say that $X$ is isomorphic to $Y$ and write $X\sim Y$. If $T$ is an isometric isomorphism, we say that $X$ is isometrically isomorphic to $Y$ and write $X\cong Y$. If an operator $T$ is such that $T^2=T\circ T=T$ we say that $T$ is a projection. If $T$ is a projection with image $Y$, we say that $Y$ is complemented in $X$.
 
%\textcolor{magenta}{We denote the support of a function by $\mathrm{supp}(f)$ and the cardinality of a set by $|A|$}

\section{Introduction}

In 1976, see \cite{Lind}, J. Lindenstrauss posed the following
\begin{question}
Does there exist an infinite dimensional Banach space $X$ so that each operator $T:X\to X$ is of the form
$T=aI+S$, where $a$ is a scalar, $I:X\to X$ is the identity operator and $S:X\to X$ is a compact operator?
\end{question}

Lindenstrauss observed that if the previous question had an affirmative answer, the expected space $X$ would present a particularly exotic geometry. 
More specifically, for any projection $P:X\to X$ with image $Y$ there is a scalar $a$ and a compact operator $S$ such that $P=aI+S$. Therefore
\[P^2=(aI+S)\circ (aI+S)=a^2I+2aS+S^2=aI+S=P\] 
and we may deduce that $(a^2-a)I=S'$ where $S'=S-2aS-S^2$ is compact operator. Since the identity $I$ is not compact,  
$a=1$ or $a=0$. If $a=0$, the projection $P$ is itself compact. Therefore $Y$ is finite dimensional. On the other hand, if 
$a=1$, then the complementary projection $P-I=S$ is a compact operator. Consequently $Y$ is a subspace of finite codimension. 
We conclude that if $X=Y\oplus Z$, then either $Y$ of $Z$ has finite dimension. Banach spaces with this property are called indecomposable.

The question of Lindenstrauss was completely solved in the affirmative in 2011 by S. A. Argyros and R. G. Haydon in \cite{ArgyrosHaydon}. It had, nevertheless, inspired several other fascinating lines of research is the past decades. From works of S. Shelah \cite{Shelah} to the celebrated work of  W. T. Gowers and B. Maurey \cite{GowMor} where the famous problem of unconditional basic sequence was solved: there is a Banach space possessing a Schauder bases without unconditional basic sequence. W. T. Gowers and B. Maurey constructed in \cite{GowMor} a Banach space $X$ such that every operator $T:X\to X$ is of the form $T = aI + S$ where $a$ is a scalar and $S$ is a strictly singular operator. We recall that an operator $S:X\to X$ is said to be strictly singular if there is no infinite dimensional subspace $Y$ of $X$ such that the restriction of $S$ to $Y$ is an isomorphism onto its image. Furthermore, the space of Gowers and Maurey is hereditarily indecomposable (H.I.), that is, every infinite dimensional subspace is indecomposable.

The question of Lindestrauss can be also considered in the context of $C(K)$ spaces. In 2004, see \cite{Koszmider1}, under the assumption of the continuum hypothesis,  P. Koszmider constructed a connected Hausdorff compactum $K$ such that any linear bounded operator $T : C(K)\to C(K)$ is of the form $gI + S$  where $g\in C(K)$, $I$ denotes the identity operator and $S$ is a weakly compact linear operator on $C(K)$ or equivalently (in $C(K)$ spaces) strictly singular. Later, G. Plebanek obtained a $C(K)$ space with the same properties but his construction does not depend on the continuum hypothesis, see \cite{Pleb}.

Similar questions are also interesting for Asplund spaces of the form $C(K)$, that is, when $K$ is a scattered compactum. It is important observe that if $K$ is an infinite scattered compactum, them $C(K)$ has a complemented copy of $c_0$. Such copy induce many 
operators that are not of the form $T=a I + S$ where $S:C(K)\to C(K)$ is a compact operator, see \cite{Kosz1}.

In 2005, see \cite{Koszmider2}, by assuming either the continuum hypothesis or its negation and the Martin's Axiom, P. Koszmider
obtained an example of a scattered Hausdorff compactum $K$ such that every operator $T : C(K)\to C(K)$ is of the form 
$T = a I + S$ where $a$ is a fixed real number and $S$ has its image included in a copy of $c_0$. Moreover, if $C(K)\sim A\oplus B$ where $A$ and $B$ are infinite-dimensional Banach spaces then $A\sim C(K)$ and $B\sim c_0$, or vice versa.

It is possible to obtain different examples with related properties under different extra set-theoretic assumptions. In 2011, see \cite{KZ}, under the assumption of Ostaszewski's $\clubsuit$-principle \cite{Osta}, P. Koszmider and P. Zieli\'nski presented a 
scattered compactum $K$ such that every operator $T : C(K)\to C(K)$ is of the form $T = a I + S$ where $a \in \mathbb{R}$ is a constant, $S$ has separable image included in a complemented subspace of $c_0$ or $c_0(\omega^{\omega})$. 
Moreover, if $C(K)\sim A\oplus B$ where $A$ and B are infinite-dimensional Banach spaces then $A\sim C(K)$ and $B\sim c_0$ or $B\sim C_0(\omega^\omega)$, or vice versa.

In this paper we investigate the same type of questions above for spaces of the form $C_0(\alpha\times L)$. More precisely, 
inspired in a construction from \cite{CK} we isolate a property for a locally compact Haudorsff space $L$ which will imply, for each countable ordinal space $\alpha$, that every operator on $C_0(\alpha\times L)$ is as simple as possible. 

We will see (Proposition \ref{oper6}) that each operator $R:C_0(\alpha)\to C_0(\alpha)$ induces an operator 
$R_L:C_0(\alpha\times L)\to C_0(\alpha\times L)$ through the following formula
\[R_L(f)(x,y)=R(f\restriction_{\alpha\times \{y\}})(x).\] 
Where for each $y\in L$, $f\restriction_{\alpha\times \{y\}}$ denotes the function $x\mapsto f(x,y)\in C_0(\alpha)$. 

Therefore, there are always on $C_0(\alpha\times L)$ operators of the form 
$T=R_L+S$, where $R_L$ is induced by an operator $R:C_0(\alpha)\to C_0(\alpha)$ and $S:C_0(\alpha\times L)\to C_0(\alpha\times L)$ is an operator of separable image.

We must, however, observe that it may exist operators that are not of previous form, see Remark \ref{geom2222}. 

In our main result, under the assumption of Ostaszewski's 
$\clubsuit$-principle, we will be present a scattered locally compact space $L$ such that all operators on $C_0(\alpha\times L)$  are of the form $T=R_L+S$. More specifically,

\begin{theorem}[$\clubsuit$]\label{main1}There is a non-metrizable scattered locally compact Hausdorff space $L$ such that, for every countable ordinal spaces $\alpha$ and $\beta$, for every operator $T:C_0(\alpha\times L)\to C_0(\beta\times L)$, there is a unique operator $R:C_0(\alpha)\to C_0(\beta)$ and a unique operator $S:C_0(\alpha\times L)\to C_0(\beta\times L)$  with separable image  such that $T=R_L+S$.
\end{theorem}

If $K=L\cup\{\infty\}$ where $L$ is the space from Theorem \ref{main1}, we have the following

\begin{corollary}\label{main2}
For any metric compacta $M_1$ and $M_2$, 
\[C(M_1\times K)\sim C(M_2\times K)\iff C(M_1)\sim C(M_2).\]
\end{corollary}

In particular, for all $n,m<\omega$, $C(K)^n\sim C(K)^m$ if and only if $n=m$.

As a consequence of Theorem \ref{main1}, the geometry of such space $C_0(\alpha\times L)$ is completely revealed. It is possible to classify up to isomorphism all its complemented subspaces. Namely,

\begin{theorem}[$\clubsuit$]\label{main3}There is a non-metrizable locally compact scattered Hausdorff space $L$ such that, for any
countable ordinal $\alpha$,  $X$ is a complemented subspace of $C_0(\alpha\times L)$ if and only if 
$X\sim (A \widehat{\otimes}_{\varepsilon} C_0(L)) \oplus B$ where $A$ is a complemented subspace of $C_0(\alpha)$ and 
$B$ is a complemented subspace of $C_0(\alpha)\oplus C_0(\omega^\omega)$. 
\end{theorem}

In the particular case $\alpha=\omega^\omega$, from Theorem \ref{main3} and Propositions \ref{geom1} and \ref{geom3}, we may deduce that $X$ is a infinite dimensional complemented subspace of $C_0(\omega^\omega\times L)$ if and only if $X$ is isomorphic to either $c_0$, $C_0(\omega^\omega)$, $C_0(L)^n$, $C_0(\omega\times L)$ or $C_0(\omega^\omega\times L)$.

The paper is organized as follows. In the Section \ref{div} we will establish some results concerning the space $C_0(\alpha\times L)$ where $L$ is a particular type of scattered space. In Section \ref{Oper}, we will introduce a combinatorial property, prove its existence and demonstrate Theorem \ref{main1} and Corollary \ref{main2}. Finally, in Section \ref{Geo} we will investigate the complemented subspaces of $C_0(\alpha\times L)$ when all operators are of the form $T=R_L+S$ and establish Theorem \ref{main3}.

%%%%%%%%%%%%%%%%%%%%%%%%%%%%%%%%%%%%%%%%%%%%%%%%%%%%%%%%%%%%%%%%%%%%%%%%%%%%%%%%%%%%%%%%%%%
%%%%%%%%%%%%%%%%%%%%%%%%%%%%%%%%%%%%%%%%%%%%%%%%%%%%%%%%%%%%%%%%%%%%%%%%%%%%%%%%%%%%%%%%%%%
%%%%%%%%%%%%%%%%%%%%%%%%%%%%%%%%%%%%%%%%%%%%%%%%%%%%%%%%%%%%%%%%%%%%%%%%%%%%%%%%%%%%%%%%%%%
%%%%%%%%%%%%%%%%%%%%%%%%%%%%%%%%%%%%%%%%%%%%%%%%%%%%%%%%%%%%%%%%%%%%%%%%%%%%%%%%%%%%%%%%%%%

\section{The space $C_0(\alpha\times L)$}
\label{div}

In this section we will present a number of results concerning Banach spaces $C_0(\alpha\times L)$, where 
$\alpha$ is a countable ordinal space and $L$ is an element of the class $\mathcal{S}$, defined as follows: a locally compact Hausdorff space $L$ belongs to $\mathcal{S}$ if there exist finite-to-one continuous surjection of $L$ onto $[0,\omega_1)$, that is, a continuous surjective function $\varphi:L \to [0,\omega_1)$ such that $|\varphi^{-1}[\{x\}]|<\omega$ for every $x \in L$. 

\begin{proposition}\label{scat} If $L\in \mathcal{S}$ then $L$ is a scattered space.
\end{proposition}
\begin{proof}
Let $A$ be a nonempty subset of $L$. Then $\varphi[A]$ is a nonempty subset of $[0,\omega_1)$. Since $[0,\omega_1)$ is scattered, there is $x\in A$ such that $\varphi(x)$ is isolated in $\varphi[A]$, i.e., $\{\varphi(x)\}$ is an open set in $\varphi[A]$. Then $\varphi^{-1}[\{\varphi(x)\}]$ is an open set in $A$. But it is a finite open set and since $L$ is Hausdorff, $x$ is isolated in $A$. We deduce that 
$L$ is a scattered space.

\end{proof}

From now on, we will always consider for each $L\in \mathcal{S}$ a fixed finite-to-one continuous surjection $\varphi:L\to [0,\omega_1)$ together with the collection
$\{L_{\lambda}:\lambda<\omega_1\}$, where $L_\lambda=\varphi^{-1}[[0,\lambda]]$ for each $\lambda<\omega_1$. It is a collection of clopen subsets of $L$ such that $L_{\xi}\subseteq L_{\rho}$ whenever $\xi<\rho$ and 
\[L=\bigcup_{\lambda<\omega_1}L_{\lambda}.\]

From this clopen cover we deduce that each compact subset of $L$ is countable and $L$ is first-countable. Moreover, each countable subset of $K=L \cup \{\infty\}$ has countable closure.

Throughout the paper, for  each $0\leq \rho<\omega_1$ we use the following identifications: 
\begin{align*}
C_0(L_{\rho})&\cong\{g \in C_0(L): g(y)=0\text{ for each } y \in L\setminus L_{\rho}\}\\
C_0(L\setminus L_{\rho})&\cong\{g \in C_0(L): g(y)=0\text{ for each } y \in L_{\rho}\}         
\end{align*}
For $0\leq \rho<\omega_1$ we define:
\begin{align*}
A_{\rho}(\alpha)&=\{f \in C_0(\alpha\times L):f(\eta,y)=0\text{ for all }(\eta,y)\in \alpha\times (L\setminus L_{\rho})\},\\
B_{\rho}(\alpha)&=\{f \in C_0(\alpha\times L):f(\eta,y)=0\text{ for all }(\eta,y)\in \alpha\times L_{\rho}\}
\end{align*}

It is evident that for each $\rho<\omega_1$, 
\[A_\rho(\alpha)\cong C_0(\alpha)\widehat{\otimes}_{\varepsilon} C_0(L_{\rho})\text{ and }B_\rho(\alpha)\cong C_0(\alpha)\widehat{\otimes}_{\varepsilon} C_0(L\setminus L_{\rho}).\]
Since $\alpha\times L$ is the disjoint union of the clopen sets $\alpha\times L_{\rho}$ 
and $\alpha\times (L\setminus L_{\rho})$ we have \[C_0(\alpha\times L)=A_\rho(\alpha)\oplus B_\rho(\alpha).\]

%Let $T:C_0(\alpha\times L)\to C_0(\alpha\times L)$ be an operator. According to Theorem \ref{main1}, 
%there is an operator $R:C_0(\alpha)\to C_0(\alpha)$ and an operator $S:C_0(\alpha\times L)\to C_0(\beta\times L)$ with separable range such that $T=R_L+S$. We have the following:

%In the first part we will establish that for each $\rho<\omega_1$,  $B_{\rho}(\alpha)$ is isomorphic to $C_0(\alpha\times L)$. Moreover, 
%if $\alpha=\omega$, then $A_{\rho}(\alpha)$ is isomorphic to $c_0$ or $C_0(\omega^\omega)$ and if $\alpha=\omega^\omega$, then $A_{\rho}(\alpha)$ is  isomorphic to $C_0(\omega^\omega)$.

In what follows we prove a number of propositions that will play some role in the process of establishing our main results.
We will always consider fixed countable ordinals $\alpha$, $\beta$; a space $L\in \mathcal{S}$ and an operator $T:C_0(\alpha\times L)\to C_0(\beta\times L)$. We denote by $T^*:C_0(\beta\times L)^*\to C_0(\alpha\times L)^*$ the dual operator of $T$.

\begin{proposition}\label{countablesupport}
If $f\in C_0(\alpha\times L)$, then $f$ has countable support.
\end{proposition}
\begin{proof}
For each $n \geq 1$, the set $\Lambda_n=\{z\in \alpha\times L: |f(z)|\geq \frac{1}{n}\}$ is compact and then countable because $L\in \mathcal{S}$. The support of $f$ is the union $\bigcup_{n=1}^{\infty}\Lambda_n$, therefore, it is countable. 
\end{proof}

\begin{proposition}\label{oper7}
If $X$ is a separable subspace of $C_0(\alpha\times L)$, then $X\subset A_{\rho}(\alpha)$ for some $\rho<\omega_1$.
\end{proposition}
\begin{proof}
Let $D=\{g_n:n \in \mathbb{N}\}$ be a countable dense subset of $X$. Since each $g_n$ has countable support, there is $\rho<\omega_1$ such that $g_n(x,y)=0$ for each $(x,y) \in \beta\times (L\setminus L_{\rho})$ for every $n \in \mathbb{N}$.

Next, let $f\in X$ and $(x,y) \in \beta\times (L\setminus L_{\rho})$ arbitrary. Given $\epsilon>0$, since $D$ is dense in $X$, there is $n \in \mathbb{N}$ such that $|f(x,y)|=|f(x,y)-g_n(x,y)|\leq\|f-g_n\|\leq \epsilon$. Then $|f(x,y)|\leq \epsilon$ for each $\epsilon>0$, i.e., $f(x,y)=0$. Hence, $f\in A_{\rho}(\alpha)$.
\end{proof}

\begin{proposition}\label{geom8} If $T$ has separable image, there is $\rho<\omega_1$ such that $T[B_{\rho}(\alpha)]$ is the null subspace.
\end{proposition}
\begin{proof}
Because $T$ has separable image, by Proposition \ref{oper7}, there is $\rho<\omega_1$ such that 
$T[C_0(\alpha \times L)]\subseteq A_{\rho}(\beta)\cong C_0(\beta\times L_{\rho})$. Towards a contradiction, assume that for each $\lambda<\omega_1$ there is $f_{\lambda} \in B_{\lambda}(\alpha)$ such that $T(f_\lambda)\neq 0$. Then, for each $\lambda<\omega_1$ there is $(x_{\lambda},y_{\lambda}) \in \beta\times L_{\rho}$ such that $T(f_\lambda)(x_{\lambda},y_{\lambda})\neq 0$. 
Since $\beta\times L_{\rho}$ is countable, without loss of generality we may assume that for some $(x,y)\in \beta\times L_{\rho}$, 
$T(f_\lambda)(x,y)\neq 0$ for each $\lambda<\omega_1$. Since  $T^*(\delta_{(x,y)})$ is atomic and has countable support there is 
$\xi<\omega_1$ such that 
\[\alpha\times (L\setminus L_{\xi})\cap \mathrm{supp}(T^*(\delta_{(x,y)}))=\emptyset.\]

Since $f_\xi(z)=0$ for each 
$z \in  \alpha\times L_{\xi}$, 
\begin{align*}T(f_\xi)(x,y)&=\int f_\xi dT^*(\delta_{(x,y)})=\int_{\alpha\times L_{\xi}} f_\xi dT^*(\delta_{(x,y)})=0,
\end{align*}
and this is a contradiction. 
\end{proof}

\begin{proposition}\label{oper1}For any countable set $A_0\subseteq L$  there is a countable set 
$B_0\subset L$ such that $\mathrm{supp}(T^*(\delta_{z}))\cap (\alpha\times A_0)=\emptyset$ 
whenever $z\in \beta\times (L\setminus B_0)$.
\end{proposition}
\begin{proof}
For otherwise, there is a countable set $A_0$ such that 
\[B=\{z\in \alpha \times L:\mathrm{supp}(T^*(\delta_{z}))\cap (\alpha \times A_0)\neq\emptyset\}\] is uncountable. 
Then, there is $w_0\in A$ and $\epsilon>0$ such that $|T^*(\delta_{z})|(\{w_0\})\geq \epsilon$ for each $z$ belonging to an uncountable subset $B_0\subseteq B$. Since $\alpha\times L$ is first countable, we may fix a countable clopen basis $\mathcal{V}_{w_0}$ for $w_0$. For each $z \in B_0$, let $V_{z}\in \mathcal{V}_{w_0}$ such that $|T^*(\delta_{z})|(V_z\setminus \{w_0\})\leq \frac{\epsilon}{2}$. 
Because $B_0$ is uncountable and $\mathcal{V}_{w_0}$ is countable, without loss of generality, we may assume that 
for some $V \in \mathcal{V}_{w_0}$, $|T^*(\delta_{z})|(V\setminus \{w_0\})\leq \frac{\epsilon}{2}$ for each $z \in B_0$.
Then,
\begin{align*}
|T(\chi_V)(z)|&=|\int \chi_V dT^*(\delta_{z})|=|T^*(\delta_{z})(V)|\\
               &\geq |T^*(\delta_{z})|(\{w_0\})-|T^*(\delta_{z})|(V\setminus \{w_0\})\geq \frac{\epsilon}{2}
\end{align*}
and this is a contradiction because, according to Proposition \ref{countablesupport}, $T(\chi_V)$ has countable support.
\end{proof}

\begin{proposition}\label{oper6}
For each operator $R:C_0(\alpha)\to C_0(\beta)$, the formula  
\[R_L(f)(x,y)=R(f\restriction_{[0,\alpha]\times \{y\}})(x)\] defines an operator $R_L:C_0(\alpha\times L)\to C_0(\beta\times L)$ with $\|R_L\|=\|R\|$.   
\end{proposition}
\begin{proof}
We first check that $R_L$ is well defined by proving that for each $f \in C_0(\alpha\times L)$, the function 
$R_L(f):[0,\beta]\times K\to \mathbb{R}$ given by the formula above is an element of $C_0(\beta\times L)$. It is clear that $R_L(f)$ vanishes at $\partial([0,\beta]\times K)$. To prove that $R_L(f)$ is continuous we pick an arbitrary point $(x,y)\in [0,\beta]\times K$  and distinguish two cases: 
\begin{case}[1]$y \neq \infty$. 
\end{case}

Let $\{(x_n,y_n)\}_{n}$ be a sequence in $\alpha\times L$ such that $(x_n,y_n)\to (x,y)$. Since $x_n \to x$ and $R(f\restriction_{[0,\alpha]\times \{y\}})\in C_0(\beta)$, there is $N_0$ such that whenever $n \geq N_0$
\[\left|R(f\restriction_{[0,\alpha]\times \{y\}})(x_n)-R(f\restriction_{[0,\alpha]\times \{y\}})(x)\right|<\frac{\epsilon}{2}.\]

Since $y_n \to y$ and $\alpha$ is a countable ordinal, $f\restriction_{[0,\alpha]\times \{y_n\}}\to f\restriction_{[0,\alpha]\times \{y\}}$ in $C_0(\alpha)$ and there is $N_1 \in \mathbb{N}$ such that whenever $n\geq N_1$
\[\|f\restriction_{[0,\alpha]\times \{y_n\}}-f\restriction_{[0,\alpha]\times \{y\}}\|<\frac{\epsilon}{2(\|R\|+1)}.\]

If $n \geq \max\{N_0,N_1\}$, then  
\begin{align*}
|R_L(f)(x_n,y_n)-R_L(f)(x,y)|&\leq |R_L(f)(x_n,y_n)-R_L(f)(x_n,y)|\\
                                 &+|R_L(f)(x_n,y)-R_L(f)(x,y)|\leq \\
                                 &\leq \|R\|\|f\restriction_{[0,\alpha]\times \{y_n\}}-f\restriction_{[0,\alpha]\times \{y\}}\|\\
                                 &+|R(f\restriction_{[0,\alpha]\times \{y\}})(x_n)-R(f\restriction_{[0,\alpha]\times \{y\}})(x)|\\
                                 &<\frac{\epsilon}{2}+\frac{\epsilon}{2}=\epsilon. 
\end{align*}
Because $L$ is first-countable and $[0,\beta]$ is metrizable we deduce that $R_L(f)$ is continuous in $(x,y)$.

\begin{case}[2]$y=\infty$
\end{case}
Let $\{(x_\gamma,y_{\gamma})\}_{\varGamma}$ be a net in $[0,\alpha]\times K$ converging to the point $(x,\infty)$ and towards a contradiction suppose $\epsilon>0$ such that $|R_L(f)(x_\gamma,y_\gamma)|\geq \epsilon$ for each $\gamma\in \varGamma$.
For each $\gamma\in \varGamma$, let $a_\gamma\in [0,\alpha]$ be such that $f(a_{\gamma},y_\gamma)=\sup_{a\in [0,\alpha]}|f(a,y_{\gamma})|$. Then
\[|R_L(f)(x_\gamma,y_\gamma)|=|R(f\restriction_{[0,\alpha]\times \{y_\gamma\}})(x_\gamma)|\leq \|R\|\sup_{x\in [0,\alpha]}|f(x,y_{\gamma})|\leq |f(a_{\gamma},y_{\gamma})|\]

Since $[0,\alpha]$ is compact, by taking a subnet if necessary we may assume that $(a_{\gamma},y_\gamma)\to (a,\infty)$. Since 
$f$ is continuous, $f(a_{\gamma},y_{\gamma})\to 0$ which is a contradiction. Therefore, $R_L(f)$ is continuous at $(x,y)$.

We deduce that $R_L:C_0(\alpha\times L)\to C_0(\beta\times L)$ is a well defined function and it is standard to check that $R_L$ is also linear and bounded with $\|R_L\|\leq \|R\|$. To see that $\|R\|\leq \|R_L\|$ we fix $\epsilon>0$ and a function $g\in C_0(\alpha)$ such that $\|g\|\leq 1$ and $\|R\|< \|R(g)\|+\epsilon$. 
Then we fix a point $y \in L$ and a function $h\in C_0(L)$ such that $0 \leq h \leq 1=h(y)$. The function $g\otimes h$ is an element of the unit ball of $C_0(\alpha\times L)$
and 
\[\|R_L\|\geq \|R_L(g\otimes h)\|=\|R(g\otimes h\restriction_{[0,\alpha]\times \{y\}})\|=\|R(g)\|\geq \|R\|-\epsilon.\] 
Since $\epsilon$ is arbitrary, we deduce that $\|R_L\|\geq \|R\|$.
\end{proof}

\begin{remark}\label{operatorRL}For each operator $R:C_0(\alpha)\to C_0(\beta)$ we denote by 
$R_L:C_0(\alpha\times L)\to C_0(\beta\times L)$ the operator given by the formula 
\[R_L(f)(x,y)=R(f\restriction_{[0,\alpha]\times \{y\}})(x).\]
\end{remark}

\begin{proposition}\label{oper8}
For each operator $R:C_0(\alpha)\to C_0(\beta)$, $R_L:C_0(\alpha\times L)\to C_0(\beta\times L)$ has separable 
image if and only if $R$ is the null operator.
\end{proposition}
\begin{proof}
Assume that $R_L$ has separable image. According to Proposition \ref{oper7} there is $\rho<\omega_1$, such that 
$R_L[C_0(\alpha\times L)]\subset A_{\rho}(\beta)$. Then, for each $f \in C_0(\alpha\times L)$, for all $(x,y) \in \beta\times (L\setminus L_{\rho})$, $R_L(f)(x,y)=0$.

Let $y_0 \in L\setminus L_{\rho}$ be an isolated point. Given an arbitrary function $h\in C_0(\alpha)$, let $\chi_{\{y_0\}}:K\to \mathbb{R}$ be the characteristic function of $\{y_0\}$ and consider $f=h\otimes \chi_{\{y_0\}}\in C_0(\alpha\times L)$.
For each $x \in \beta$ 
\[R(h)(x)=R(h\restriction_{[0,\alpha] \times \{y_0\}})(x)=R_L(f)(x,y_0)=0.\] 
Then $R(h)=0$ and we deduce that $R$ is the null operator.
\end{proof}

\begin{proposition}\label{geom6}
Let $R:C_0(\alpha)\to C_0(\beta)$ be an operator. Then, for any $\rho<\omega_1$, 
$\overline{R_L[B_\rho(\alpha)]}=R[C_0(\alpha)]\widehat{\otimes}_{\varepsilon} C_0(L\setminus L_{\rho})$.
\end{proposition}
\begin{proof}
Let $\rho<\omega_1$ arbitrary. We first prove that $R[C_0(\alpha)]\widehat{\otimes}_{\varepsilon} C_0(L\setminus L_{\rho})\subseteq \overline{R_L[B_\rho(\alpha)]}$.
Indeed, let $G=\sum_{i=1}^{m}R(f_i)\otimes g_i$ be an element of $R[C_0(\alpha)]\otimes_{\varepsilon} C_0(L\setminus L_{\rho})$, where 
$f_i\in C_0(\alpha)$ and $g_i\in C_0(L\setminus L_{\rho})$. Then 
$F=\sum_{i=1}^{m}f_i\otimes g_i\in B_{\rho}(\alpha)$ and 
\begin{align*}
R_L(F)(x,y)&=R_L(\sum_{i=1}^{m}f_i\otimes g_i)(x,y)=R(\sum_{i=1}^{m}(f_i\otimes g_i)\restriction_{[0,\alpha]\times {y}})(x)\\
                 & =\sum_{i=1}^{m}R(f_i)(x)\cdot g_i(y)=\sum_{i=1}^{m}(R(f_i)\otimes g_i)(x,y)=G(x,y).
\end{align*}
We deduce that $R[C_0(\alpha)]\widehat{\otimes}_{\varepsilon} C_0(L\setminus L_{\rho})\subseteq \overline{R_L[B_\rho(\alpha)]}$.

To check the opposite inclusion, let $G\in R_L[B_\rho(\alpha)]$ and $F \in B_\rho(\alpha)$ such that $G=R_L(F)$. Since 
$B_\rho(\alpha)\cong C_0(\alpha)\widehat{\otimes}_{\varepsilon} C_0(L\setminus L_{\rho})$, there is a sequence $(F_n)_{n}$ converging to $F$, where $F_n$ has the form $\sum_{i=1}^{m}f_i\otimes g_i$ 
where $f_i \in C_0(\alpha)$ and $g_i \in C_0(L\setminus L_{\rho})$. Then
\begin{align*}
R_L(F_n)(x,y)&=R_L(\sum_{i=1}^{m}f_i\otimes g_i)(x,y)=R(\sum_{i=1}^{m}(f_i\otimes g_i)\restriction_{[0,\alpha]\times {y}})(x)\\
                   &=\sum_{i=1}^{m}R(f_i)(x)\cdot g_i(y)=\sum_{i=1}^{m}(R(f_i)\otimes g_i)(x,y)
\end{align*}
and we deduce that $R_L(F_n)\in R[C_0(\alpha)]\otimes_{\varepsilon} C_0(L\setminus L_{\rho})$ for each $n \in \mathbb{N}$. Thus 
$G \in R[C_0(\alpha)]\widehat{\otimes}_{\varepsilon} C_0(L\setminus L_{\rho})$. Therefore 
$\overline{R_L[B_{\rho}(\alpha)]}\subseteq R[C_0(\alpha)]\widehat{\otimes}_{\varepsilon} C_0(L\setminus L_{\rho})$.

\end{proof}

For the last two results of this section, we assume that the operator
$T:C_0(\alpha\times L)\to C_0(\beta\times L)$ has the form $T=R_L+S$
where $S:C_0(\alpha\times L)\to C_0(\beta\times L)$ is an operator with separable image and $R_L$ is induced as in Remark 
\ref{operatorRL}, by an operator $R:C_0(\alpha)\to C_0(\beta)$.

\begin{proposition}\label{oper9}
If $T$ is an isomorphism, then $R$ is an isomorphism. 
\end{proposition}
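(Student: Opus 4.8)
The plan is to apply Theorem \ref{main1} to the inverse operator and then exploit the algebraic behaviour of the assignment $R\mapsto R_L$ together with Proposition \ref{oper8}. Since $T$ is an isomorphism, $T^{-1}:C_0(\beta\times L)\to C_0(\alpha\times L)$ is a bounded linear operator, so Theorem \ref{main1} furnishes a decomposition $T^{-1}=R'_L+S'$ where $R':C_0(\beta)\to C_0(\alpha)$ is bounded linear and $S'$ has separable range. The aim is then to show that $R'$ is a two-sided inverse of $R$.

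First I would record two elementary identities about the construction of Proposition \ref{oper6}. The assignment $R\mapsto R_L$ is linear, as already used in the uniqueness part of the proof of Theorem \ref{main1}, and it is multiplicative: a direct computation from the defining formula $R_L(f)(x,y)=R(f\restriction_{[0,\alpha]\times\{y\}})(x)$ shows that $(R'R)_L=R'_L R_L$, because $(R_L f)\restriction_{[0,\beta]\times\{y\}}=R(f\restriction_{[0,\alpha]\times\{y\}})$ for every $y\in L$. In particular, applied to the identity operator on $C_0(\alpha)$, this gives $(\mathrm{Id})_L=I$, the identity of $C_0(\alpha\times L)$.

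Next I would compute $I=T^{-1}T=(R'_L+S')(R_L+S)$ and expand it as $R'_L R_L+R'_L S+S' R_L+S' S$. Each of the three cross terms has separable range: composing an arbitrary bounded operator with an operator of separable range, on either side, again yields separable range, since a continuous linear image of a separable space is separable. Hence $I-R'_L R_L=(\mathrm{Id}-R'R)_L$ has separable range, and Proposition \ref{oper8} forces $\mathrm{Id}-R'R=0$, that is $R'R=\mathrm{Id}_{C_0(\alpha)}$. Running the symmetric argument with $TT^{-1}=I$ on $C_0(\beta\times L)$ yields $RR'=\mathrm{Id}_{C_0(\beta)}$, so $R'$ is a bounded two-sided inverse of $R$ and $R$ is an isomorphism.

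All of the computations here are routine; the only point requiring genuine care is the multiplicativity identity $(R'R)_L=R'_L R_L$, since it is exactly what permits rewriting the separable-range obstruction in the form $P_L$ so that Proposition \ref{oper8} can be invoked. Everything else is bookkeeping about separable ranges together with the decomposition and its uniqueness already established in Theorem \ref{main1}.
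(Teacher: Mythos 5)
Your proof is correct, but it follows a genuinely different route from the paper's. The paper argues directly on $T$: it first shows $R$ is bounded below (if $\|f_n\|=1$ and $R(f_n)\to 0$, separability of the range of $S$ yields isolated points $y_n\neq y_n'$ with $\|S(f_n\otimes\chi_{\{y_n\}}-f_n\otimes\chi_{\{y_n'\}})\|<1/n$, so that $h_n=f_n\otimes\chi_{\{y_n\}}-f_n\otimes\chi_{\{y_n'\}}$ is a norm-one vector with $\|T(h_n)\|\le 2\|R(f_n)\|+1/n\to 0$, contradicting that $T$ is an isomorphism), and then shows $R$ is surjective by choosing an isolated point $y_0$ outside the countable set supporting the range of $S$ and pulling $g\otimes\chi_{\{y_0\}}$ back through $T$. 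Your argument is instead algebraic: you decompose $T^{-1}=R'_L+S'$ via Theorem \ref{main1}, observe that $R\mapsto R_L$ is linear and multiplicative with $(\mathrm{Id})_L=I$, that operators of separable range form a two-sided ideal under composition, and then read off $R'R=\mathrm{Id}$ and $RR'=\mathrm{Id}$ from Proposition \ref{oper8} (equivalently, from the uniqueness clause of Theorem \ref{main1} applied to the identity). The identities $(R'R)_L=R'_LR_L$ and $(\mathrm{Id})_L=I$ do follow immediately from the defining formula, since $(R_Lf)\restriction_{[0,\beta]\times\{y\}}=R(f\restriction_{[0,\alpha]\times\{y\}})$. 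Your approach is more conceptual and yields slightly more information, namely that the inverse of $R$ is exactly the operator $R'$ appearing in the canonical decomposition of $T^{-1}$; the paper's approach is more hands-on and does not need to invoke the decomposition of $T^{-1}$ at all, only the separability of the range of $S$.
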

\begin{proof}
We prove first that there is $\lambda>0$, such that $\|R(f)\|\geq \lambda\|f\|$  for each 
$f \in C_0(\alpha)$. Suppose that there is 
a sequence $(f_n)_{n}$ such that $\|f_n\|=1$ and $R(f_n)\to 0$.
For each isolated point in  $y\in L$ we define the function $f_{n,y}=f_n\otimes \chi_{\{y\}}\in C_0(\alpha\times L)$.

Since $S[C_0(\alpha\times L)]$ is separable, for each $n \in \mathbb{N}$ there are $y_n$ and $y'_n$ such that 
$y_n\neq y'_n$ and $\|S(f_{n,y_n})-S(f_{n,y'_{n}}) \|<\frac{1}{n}$. We then define for each $n \in \N$, the function 
$h_n=f_{n,y_n}-f_{n,y'_{n}}$. Since $\|h_n\|=1$, 
\begin{align*}
\|T(h_n)\|= \|R_L(h_n)+S(h_n)\|\leq \|R_L(h_n)\|+\|S(h_n)\|\leq 2\|R(f_n)\|+\frac{1}{n}.
\end{align*}
Thus, $T$ is not an isomorphism.

We deduce that if $T$ is an isomorphism, then $R:C_0(\alpha)\to C_0(\beta)$ is a linear embedding. Therefore, $R[C_0(\alpha)]$ is a closed subspace of $C_0(\beta)$.

Next, let $g$ be an arbitrary function in $C_0(\beta)$. Since $S[C_0(\alpha\times L)]$ is separable, by Proposition \ref{oper7}, there 
$\rho<\omega_1$ such that $S[C_0(\alpha\times L)]\subset A_{\rho}(\beta)$. Then, for each $f \in C_0(\alpha\times L)$, for all $(\xi,y) \in \beta\times (L\setminus L_\rho)$,  $S(f)(\xi,y)=0$. 
Let $y_0\in L\setminus L_\rho$ be an isolated point of $L$ and consider the function $h=g\otimes\chi_{\{y_0\}}\in C_0(\beta\times L)$. Since $T$ is an isomorphism, 
there is $f\in C_0(\alpha\times L)$ such that $T(f)=R_L(f)+S(f)=h$. For each $\xi \in \beta$, since $S(f)(\xi,y_0)=0$, we have 
\[g(\xi)=h(\xi,y_0)=R_L(f)(\xi,y_0)+S(f)(\xi,y_0)=R(f\restriction_{[0,\alpha]\times\{y_0\}})(\xi).\]
Therefore, $R$ is surjective.
\end{proof}

\begin{proposition}\label{geom7}
If $T:C_0(\alpha\times L)\to C_0(\alpha\times L)$ is a projection, then $R:C_0(\alpha)\to C_0(\alpha)$ is a projection.
\end{proposition}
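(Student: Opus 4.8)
The plan is to exploit the decomposition furnished by Theorem \ref{main1}: since $T$ is a bounded linear operator on $C_0(\alpha\times L)$, there is a unique bounded linear operator $R:C_0(\alpha)\to C_0(\alpha)$ together with a unique operator $S$ of separable range such that $T=R_L+S$. The claim to establish is that if $T$ satisfies $T^2=T$, then $R$ also satisfies $R^2=R$. The natural strategy is to compute $T^2$ in terms of the decomposition and then read off the corresponding decomposition of $T^2$, using the uniqueness clause of Theorem \ref{main1} to match components.

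First I would observe that, by the multiplicativity property of the assignment $R\mapsto R_L$ (which follows directly from the defining formula $R_L(f)(x,y)=R(f\restriction_{[0,\alpha]\times\{y\}})(x)$, since restriction to a fixed fiber $[0,\alpha]\times\{y\}$ commutes with the construction), we have $(R_L)\circ(R_L)=(R\circ R)_L=(R^2)_L$. Hence, expanding,
\begin{align*}
T^2=(R_L+S)^2=(R^2)_L+\bigl(R_L S+S R_L+S^2\bigr).
\end{align*}
The key point is that the operator $S'=R_L S+S R_L+S^2$ has separable range: the image of $S$ is separable by hypothesis, so $R_L S$ has separable range and $S^2$ has separable range; and $S R_L$ has separable range because $S$ itself has separable range (the range of $S R_L$ is contained in the range of $S$). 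Therefore $T^2=(R^2)_L+S'$ is a decomposition of $T^2$ of the form prescribed in Theorem \ref{main1}, with the $R$-part equal to $R^2$ and the separable-range part equal to $S'$.

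On the other hand, since $T$ is a projection, $T^2=T=R_L+S$, which is itself a decomposition of $T^2$ of the same prescribed form, with $R$-part equal to $R$ and separable-range part $S$. By the uniqueness assertion in Theorem \ref{main1}, the two decompositions must coincide, whence $R^2=R$ (and, incidentally, $S'=S$). This shows $R$ is a projection.

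The main obstacle I anticipate is verifying cleanly that the composite $S R_L$ has separable range; the other summands are transparent since they are post-composed with $S$, but $S R_L$ requires noting that its range is a subset of $\operatorname{range}(S)$, which is separable. The multiplicativity identity $R_L S_L=(R S)_L$ is routine but should be checked from the fiberwise definition, using that $R_L(g)\restriction_{[0,\alpha]\times\{y\}}$ depends only on $g\restriction_{[0,\alpha]\times\{y\}}$. Once these two facts are in place, the uniqueness of the decomposition does all the remaining work.
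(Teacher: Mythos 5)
Your argument is correct, but it is not the route the paper takes. You expand $T^2=(R^2)_L+S'$ with $S'=R_L\circ S+S\circ R_L+S^2$ of separable range (all three verifications are sound, and the identity $(R_L)\circ(R_L)=(R^2)_L$ does follow at once from the fiberwise formula, since $R_L(g)\restriction_{[0,\alpha]\times \{y\}}=R(g\restriction_{[0,\alpha]\times \{y\}})$), and then you apply the uniqueness clause of Theorem \ref{main1} to the single operator $T^2=T$, which admits the two decompositions $(R^2)_L+S'$ and $R_L+S$, to conclude $R^2=R$. The paper performs the same initial expansion, writing $T^2-T=(R_L^2-R_L)+S''$ with $S''=R_L\circ S+S\circ R_L+S^2-S$ of separable range, but instead of invoking uniqueness it argues directly by contradiction: assuming $\|R^2(f)-R(f)\|\geq\epsilon$, it uses separability of the range of $S''$ to find isolated points $y_0\neq y_1$ with $\|S''(f\otimes\chi_{\{y_0\}}-f\otimes\chi_{\{y_1\}})\|<\epsilon/2$, then evaluates $R_L$ and $R_L^2$ on $h=f\otimes\chi_{\{y_0\}}-f\otimes\chi_{\{y_1\}}$ to obtain $\|T^2(h)-T(h)\|\geq\epsilon/2$. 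Your version is shorter and cleaner, since the uniqueness statement (which ultimately rests on Proposition \ref{oper8}, i.e.\ $R_L$ has separable range only if $R=0$) encapsulates exactly the estimate the paper redoes by hand; the paper's version is self-contained modulo only the existence part of the decomposition. There is no gap in your argument.
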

\begin{proof}
We first observe that  
\begin{align*}
T^2-T&=(R_L+S)\circ(R_L+S)-(R_L+S)\\
              &=R_L^2-R_L+ R_L\circ S+S\circ R_L+S^2-S
\end{align*}
and the operator $S'=R_L\circ S+S\circ R_L+S^2-S$ has separable image.

Suppose $f \in C_0(\alpha)$ and $\epsilon>0$ such that 
$\|R^2(f)-R(f)\|\geq \epsilon$.  For each isolated point $y \in L$ define $g_{y}=f\otimes \chi_{\{y\}}\in C_0(\alpha\times L)$.  Since 
$S'$ has separable image, there are distinct isolated points $y_0$ and $y_1$ in $L$, such that 
\[\|S'(g_{y_0})-S'(g_{y_1})\|=\|S'(g_{y_0}-g_{y_1})\|<\frac{\epsilon}{2}.\]
We fix $h=g_{y_0}-g_{y_1}$. Then, for each $(\xi,y)\in \alpha \times L$,
\begin{align*}
R_L(h)(\xi,y) = \left\{
\begin{array}{rl}
0 & \text{ if } y\notin \{y_0,y_1\},\\
R(f)(\xi) & \text{ if } y=y_0\\
-R(f)(\xi) & \text{ if } y=y_1.
\end{array}\right.
\end{align*}
Let $(\xi,y)\in \alpha \times L$ arbitrary. Since $R_L^2(h)(\xi,y)= R(R_L(h)\restriction_{[0,\alpha]\times \{y\}})(\xi)$,
\begin{align*}
R_L^2(h)(\xi,y) = \left\{
\begin{array}{rl}
0 & \text{ if } y\notin \{y_0,y_1\},\\
R^2(f)(\xi) & \text{ if } y=y_0,\\
-R^2(f)(\xi) & \text{ if } y=y_1.
\end{array}\right.
\end{align*}
We deduce that $\|R_L^2(h)-R_L(h)\|=\|R^2(f)-R(f)\|$. Then
\begin{align*}
\|T^2(h)-T(h)\|&\geq \|R_L^2(h)-R_L(h)\|-\|S'(h)\|\\
               &\geq \|R^2(f)-R(f)\|-\|S'(h)\|\geq \frac{\epsilon}{2},
\end{align*}
and $T$ is not projection. We conclude that if $T^2=T$, then $R^2=R$.

\end{proof}

\section{Few operators in $C_0(\alpha\times L)$}
\label{Oper}

Based on a construction from \cite{CK} we isolate an exotic combinatorial property for a space $L$ that will imply 
that all operators on $C_0(\alpha\times L)$ have one particular form. In order to enunciate such property, we need to enrich our terminology.  

For a locally compact Hausdorff space $L$, let $K=L\cup\{\infty\}$. We will say that points $(x_1,\ldots,x_n)\in K^n$ and $(y_1,\ldots,y_m)\in K^m$ are \emph{disjoint} if $\{x_1,\ldots,x_n\}\cap \{y_1,\ldots,y_m\}=\emptyset.$  For non-zero natural numbers $m_1$, $m_2$, $m_3$, we denote by $K(m_1,m_2,m_3)$ the set of all triples $(x_1,x_2,x_3)\in K^{m_1}\times K^{m_2}\times K^{m_3}$  such that $x_1, x_2, x_3$ are pairwise disjoint. We write $\Delta(m_1,m_2,m_3)$ for the subset of $K(m_1,m_2,m_3)$ consisting of all points $(x_1,x_2,x_3)$  such that the coordinates of each $x_i$ are equal for $i=1,2,3$, moreover, the coordinates of $x_3$ are all equal to $\infty$.

\begin{definition}\label{collapsed}
A locally compact Hausdorff space $L$ is said to be \emph{$3$-collapsed} if for any given non-zero numbers $m_1,\ m_2,\ m_3\in \N$, 
any uncountable subset of $K(m_1,m_2,m_3)$, where the points are pairwise disjoint if seen as elements of $K^{m_1+m_2+m_3}$, has a accumulation point in $\Delta(m_1,m_2,m_3)$.
\end{definition}

The next theorem states that $3$-collapsed spaces exist, at least under some extra set-theoretic assumptions. We recall that $\mathcal{S}$ is the class of all locally compact Hausdorff spaces such that there exists a finite-to-one continuous surjection of $L$ onto $[0,\omega_1)$, where the latter space is endowed with usual order topology. 

\begin{theorem}\label{compactclub} 
Under the assumption of Ostaszewski's $\clubsuit$-principle, there exists a $3$-collapsed space in $\mathcal{S}$ of height $\omega$.
\end{theorem}
\begin{proof}
Let $L$ be locally compact space constructed in \cite[Proposition 3.1]{CK} with $n = 2$, and let $K=L\cup\{\infty\}$. From that construction we know that $L \in \mathcal{S}$ and $L$ has height $\omega$. To check that
$L$ is $3$-collapsed we pick non-zero natural numbers $m_1,\ m_2,\ m_3$. For any uncountable 
subset of $K(m_1,m_2,m_3)$ consisting of pairwise disjoint points 
if seen as elements of $K^{m_1+m_2+m_3}$, a similar argument as in \cite[Proposition 3.1, Claim 3]{CK} yields an accumulation 
point in $\Delta(m_1,m_2,m_3)$.
\end{proof}

We will denote by $\mathcal{C}_3$ the class of all locally compact Hausdorff spaces from $\mathcal{S}$ that are \emph{$3$-collapsed} of height $\omega$. 

It is important also to observe that such spaces cannot be obtained in ZFC without assuming some extra set-theoretic principles. Indeed, under the Ostaszewski $\clubsuit$-principle the previous theorem states that $\mathcal{C}_3\neq \emptyset$. In particular, if $L\in \mathcal{C}_3$, then $K=L\cup \{\infty\}$ satisfies the hypotheses of \cite[Theorem 1.7]{CK}, therefore $C(K\times K)$ has no complemented copy of $c_0(\omega_1)$. On the other hand, for any $L\in \mathcal{S}$, then $C(K)$ has density $\omega_1$ and also $C(K)$ has a subspace isomorphic to $c_0(\omega_1)$. Under the assumption of \textbf{MM} (Martin's Maximum),  \cite[Corollary 4.7]{GAHA}, $C(K\times K)$ has a complemented copy of $c_0(\omega_1)$. Therefore $\mathcal{C}_3=\emptyset$.

Until the end of the section we will consider fixed countable ordinals $\alpha$, $\beta$  and a locally compact Hausdorff space $L\in \mathcal{C}_3$.  We will denote the space of all operators $T:C_0(\alpha\times L)\to C_0(\beta\times L)$ by $\mathcal{B}(\alpha,\beta)$. 

\begin{lemma}\label{oper2}
Let $\{r_{j}: j \in J\}\subset \mathbb{R}$ be a bounded set. Assume that $J$ is uncountable and for each $r \in \mathbb{R}$ and for any countable $J_0\subseteq J$ there is $j \in J\setminus J_0$ such that $r_j\neq r$. Then, there exist rational numbers $p<q$ and disjoint uncountable sets
$A,\ B\subset J$, such that whenever $a\in A$ and $b\in B$, we have $r_b<p<q<r_a$
\end{lemma}
\begin{proof}
Let $M>0$ such that $|r_j|<M/2$ for every $j\in J$ and for rational numbers $-M<p,q<M$ define 
$$\varLambda_{q}=\{a\in J:r_a>q\},\ \varGamma_{p}=\{b\in J:r_b<p\}$$  
and fix 
$$q_0=\sup\{q\in \mathbb{Q}:\varLambda_{q}\text{ is uncountable }\},\ p_0=\inf\{p\in \mathbb{Q}:\varGamma_{p}\text{ is uncountable }\}.$$

It is clear that $-M<p_0,q_0<M$. In the case that $p_0=q_0=\rho$ we may fix sequences 
$(p_n)_{n}$, $(q_n)_{n}$ in $(-M,M)\ \cap \ \mathbb{Q}$ such that $p_n< \rho < q_n$ and $q_n-p_n<1/n$ 
for all $n\in \mathbb{N}$.
Therefore, the following set is countable: 
$$J_0=\left(\bigcup_{n \in \mathbb{N}}\varLambda_{q_n}\right)\cup \left(\bigcup_{n \in \mathbb{N}}\varGamma_{p_n}\right).$$

By hypothesis, there is $j \in J\setminus J_0$ such that $r_j\neq \rho$, however, since $j\notin J_0$, $p_n<r_j<q_n$ for all 
$n\in \mathbb{N}$ and we deduce that $r_j=\rho$, a contradiction. 

If $q_0<p_0$ let $\rho\in \mathbb{Q}$ be such that $q_0<\rho<p_0$. It follows that $\varLambda_{\rho}$ is countable 
and therefore, the set $\{a\in J:r_a\leq \rho\}$ is uncountable. Given any $m\in \mathbb{N}$ such that $\rho+1/m<p_0$ we have that 
$\varGamma_{\rho+1/m}$ is uncountable which is a contradiction.

We conclude that $p_0<q_0$ and if we pick any rationals $p,q$ such that $p_0\leq p<q\leq q_0$ we have that 
$\varLambda_{q}\cap \varGamma_{p}=\emptyset$. We are done by defining $A=\varLambda_{q}$ and $B=\varGamma_{p}$. 
\end{proof}

\begin{proposition}\label{oper3}
For each $T\in \mathcal{B}(\alpha, \beta)$ there is a function 
$r:[0,\alpha]\times [0,\beta]\to \R$ and a closed and countable subset $A\subset L$ such that $T^*(\delta_{(\xi,y)})(\{(\eta,y)\})=r(\eta, \xi)$
for all $\eta\leq \alpha$, $\xi\leq \beta$ and $y\in L\setminus A$.
\end{proposition}
\begin{proof}
Given $\eta\leq \alpha$ and $\xi\leq \beta$ we consider the set 
\[\{T^*(\delta_{(\xi,y)})(\{(\eta,y)\}):y \in L\}\subset \R\] 
and towards a contradiction we assume that for each $r\in \R$ and for each countable subset $L_0\subset L$ there is $y \in L\setminus L_0$ such that 
$T^*(\delta_{(\xi,y)})(\{(\eta,y)\})\neq r$. According to the Lemma \ref{oper2}, there are rationals $p<q$ and uncountable 
subets $A,\ B\subset Y$ such that, whenever $x \in A$ and $y \in B$,
\[T^*(\delta_{(\xi,x)})(\{(\eta,x)\})<p<q<T^*(\delta_{(\xi,y)})(\{(\eta,y)\}).\]

We will construct uncountable sets $\{x_{\lambda}:\lambda<\omega_1\}$ and $\{y_{\lambda}:\lambda<\omega_1\}$ of points of $A$ and $B$ respectively and uncountable sets $\{G_{\lambda}:\lambda<\omega_1\}$ and $\{H_{\lambda}:\lambda<\omega_1\}$ of pairwise disjoint finite subsets of $L$ such that 
$G_{\lambda}\cap H_{\rho}=\emptyset$ for each $\lambda,\rho<\omega_1$ and satisfying: 
\begin{itemize}
\item[(1)] $x_{\lambda}\in G_{\lambda}$, $y_{\lambda}\in H_{\lambda}$;
\item[(2)] $|T^*(\delta_{(\xi,x_{\lambda})})|\left(\alpha\times (L\setminus G_{\lambda})\right)<\frac{q-p}{6}$;
\item[(3)] $|T^*(\delta_{(\xi,y_{\lambda})})|\left(\alpha\times (L\setminus H_{\lambda})\right)<\frac{q-p}{6}$.
\end{itemize}

In order to make the notation simpler, for each $x \in L$ we denote 
\[S_{x}=\{w\in L: (\rho,w)\in \mathrm{supp}(T^*(\delta_{(\xi,x)}))\text{ for some }\rho\in \alpha\}.\]

We proceed by induction as follows. Given $\lambda<\omega_1$ assume that we have obtained sets 
$\{x_{\rho}:\rho<\lambda\}$, $\{y_{\rho}:\rho<\lambda\}$, $\{G_{\rho}:\rho<\lambda\}$, $\{H_{\rho}:\rho<\lambda\}$, 
satisfying the requirements above and let 
$$\Omega_{\lambda}=\bigcup_{\rho<\lambda} \left(G_{\rho}\cup H_{\rho}\cup S_{x_{\rho}}\cup S_{y_{\rho}}\right).$$ 

Since $\Omega_{\lambda}$ is countable, by Proposition \ref{oper1} we may pick $x_{\lambda}\in A\setminus \Omega_{\lambda}$ such that
$$\mathrm{supp}(T^*(\delta_{(\xi,x_{\lambda})}))\cap (\alpha\times \Omega_{\lambda})= \emptyset.$$

By regularity and since Radon measures in scattered spaces are atomic, we may fix a finite set $G_{\lambda}\subset (L\setminus \Omega_{\lambda})$ containing $x_{\lambda}$ such that
\begin{align*}
|T^*(\delta_{(\xi,x_{\lambda})})|\left(\alpha\times(L\setminus G_{\lambda})\right)<\frac{q-p}{6}.
\end{align*} 

Consider the countable set $\Omega'_{\lambda}=\Omega_{\lambda}\cup G_{\lambda}\cup S_{x_{\lambda}}$.  According to 
Proposition \ref{oper1} we may pick $y_{\lambda} \in B\setminus \Omega_{\lambda}$ such that 
$$\mathrm{supp}(T^*(\delta_{(\xi,y_{\lambda})}))\cap (\alpha\times \Omega'_{\lambda})=\emptyset.$$

Once more, by regularity and since Radon measures in scattered spaces are atomic, we pick a finite set $H_{\lambda}\subset (L\setminus \Omega'_{\lambda})$ 
containing $y_{\lambda}$ such that 
\begin{align*}
|T^*(\delta_{(\xi,y_{\lambda})})|\left(\alpha\times(L\setminus H_{\lambda})\right)<\frac{q-p}{6}.
\end{align*} 
and this completes the construction of the sequences.

Since $\alpha$ is first countable, locally compact and zero-dimensional, we may fix a countable local basis $\mathcal{V}_{\eta}$ for $\eta$ consisting of compact clopen sets. For each $\lambda<\omega_1$, by the regularity of the measures, we may
fix $V_\lambda\in \mathcal{V}_{\eta}$ such that
\begin{align*}
|T^*(\delta_{(\xi,x_{\lambda})})|((V_\lambda\setminus \{\eta\})\times \{x_{\lambda}\})&<\frac{q-p}{6},\\
|T^*(\delta_{(\xi,y_{\lambda})})|((V_\lambda\setminus \{\eta\})\times \{y_{\lambda}\})&<\frac{q-p}{6}.
\end{align*}
Since $\mathcal{V}_{\eta}$ is countable we may assume that $V_\lambda=V$ for each $\lambda<\omega_1$.

Next, by passing to an uncountable subset if necessary, we may assume that $|G_{\lambda}|=m_1+1$ and $|H_{\lambda}|=m_2+1$ for each $\lambda<\omega_1$ and denote $G_{\lambda}=\{x_{\lambda},a^{\lambda}_1\ldots,a^{\lambda}_{m_1}\}$, $H_{\lambda}=\{y_{\lambda},b^{\lambda}_{1}\ldots,b^{\lambda}_{m_2}\}$.

By the construction of the sets $G_{\lambda}$s and $H_{\lambda}$s we can form the collection
$$Z=\{((x_{\lambda},y_{\lambda}),(a^{\lambda}_1\ldots,a^{\lambda}_{m_1}),(b^{\lambda}_{1}\ldots,b^{\lambda}_{m_2})):\lambda<\omega_1\}$$
which is a subset of $K(2,m_1,m_2)$. We observe that the points of $Z$ are by construction pairwise disjoint when seen as points of $K^{2+m_1+m_2}$. Since $K=L\cup\{\infty\}$ is $3$-collapsed, this set has a cluster point
\[u=((z,z),(w,\ldots,w),(\infty,\ldots,\infty))\in\Delta(2,m_1,m_2).\]

Let $\{((x_{\lambda_i},y_{\lambda_i}),(a^{\lambda_{i}}_{1},\ldots,a^{\lambda_{i}}_{m_1}),(b^{\lambda_{i}}_{1},\ldots,b^{\lambda_{i}}_{m_2}))\}_{i \in I}$ be a net in $Z$ converging to $u$. We fix a clopen neighborhood $U$ of $z$ such that $U\cap \{z,w,\infty\}=\{z\}$ and by passing to a subnet if necessary, we may assume 
that $U\cap G_{\lambda_{i}}=\{x_{\lambda_{i}}\}$, $U\cap H_{\lambda_{i}}=\{y_{\lambda_{i}}\}$, for all $i \in I$.

Recalling the clopen neighborhood of $\eta$ fixed above, $V$, we may write for each $i \in I$: 
\begin{align*}
V \times U\setminus \{(\eta,x_{\lambda_{i}})\}&\subseteq \beta \times (L \setminus G_{\lambda_i})\cup ((V\setminus \{\eta\}) \times \{x_{\lambda_i}\}),\\ 
V \times U\setminus \{(\eta,y_{\lambda_{i}})\}&\subseteq \beta \times (L \setminus H_{\lambda_i})\cup ((V\setminus \{\eta\}) \times \{y_{\lambda_i}\}).
\end{align*}
We have
\begin{align*}
|T(\chi_{V\times U})(\xi,x_{\lambda_{i}})|&=|T^*(\delta_{(\xi,x_{\lambda_{i}})})(V\times U)|
\leq |T^*(\delta_{(\xi,x_{\lambda_{i}})})(\{(\eta,x_{\lambda_{i}})\})|\\
&+|T^*(\delta_{(\xi,x_{\lambda_{i}})})(V\times U\setminus \{(\eta,x_{\lambda_{i}})\})|\\
&<p+|T^*(\delta_{(\xi,x_{\lambda_{i}})})|(\beta \times (L \setminus G_{\lambda_i}))\\
&+|T^*(\delta_{(\xi,x_{\lambda_{i}})})|((V\setminus \{\eta\}) \times \{x_{\lambda_i}\})\\
&<p+\frac{q-p}{6}+\frac{q-p}{6}=\frac{2p+q}{3}.
\end{align*}

\begin{align*}
|T(\chi_{V\times U})(\xi,y_{\lambda_{i}})|&=|T^*(\delta_{(\xi,y_{\lambda_{i}})})(V\times U)|
\geq |T^*(\delta_{(\xi,y_{\lambda_{i}})})(\{(\eta,y_{\lambda_{i}})\})|\\ 
&-|T^*(\delta_{(\xi,y_{\lambda_{i}})})(V\times U\setminus \{(\eta,y_{\lambda_{i}})\})|\\
&>q- |T^*(\delta_{(\xi,y_{\lambda_{i}})})|(\beta \times (L \setminus H_{\lambda_i}))\\
&-|T^*(\delta_{(\xi,y_{\lambda_{i}})})|((V\setminus \{\eta\}) \times \{y_{\lambda_i}\})\\
&>q-\frac{q-p}{6}-\frac{q-p}{6}=\frac{2q+p}{3}
\end{align*}
Since both nets $\{x_{\lambda_{i}}\}_{i \in I}$ and $\{y_{\lambda_{i}}\}_{i \in I}$ converge to $z$, the continuity of the function $T(\chi_{V\times U})$ and the above inequalities imply 
\begin{align*}
T(\chi_{V\times U})(\xi,z)&=\lim_{i\to \infty}T(\chi_{V\times U})(\xi,x_{\lambda_{i}})\leq\frac{2p+q}{3}\\
&<\frac{2q+p}{3}\leq \lim_{i\to \infty}T(\chi_{V\times U})(\xi,y_{\lambda_{i}})=T(\chi_{V\times U})(\xi,z),
\end{align*}
which is a contradiction. We deduce that for each $\eta\leq \alpha$ and $\xi\leq \beta$ there is $r(\eta, \xi)\in \R$ and 
a countable subset $A_{\eta,\xi}\subset L$ such that $T^*(\delta_{(\xi,y)})(\{(\eta,y)\})=r(\eta, \xi)$
for all $y\in L\setminus A_{\eta,\xi}$. We are done by fixing the function $(\eta,\xi)\mapsto r(\eta,\xi)$ and the closed and countable set
\[A=\overline{\bigcup_{\substack{\eta\leq \alpha\\ \xi\leq \beta}}A_{\eta,\xi}}.\]
\end{proof}

\begin{proposition}\label{oper4} 
For any $T\in \mathcal{B}(\alpha, \beta)$ there is a closed and countable subset $B\subset L$ such that 
$T^*(\delta_{(\xi,y)})(\{(\eta,x)\})=0$
for all $\eta\leq \alpha$, $\xi\leq \beta$ and $x,y\in L\setminus B$  such that $x \neq y$.
\end{proposition}
\begin{proof}
Given $\eta\in\alpha$ and $\xi\in \beta$, towards a contradiction let us assume that for every countable subset $L_0\subset L$, 
there are $y,x\in L\setminus L_0$ such that $x \neq y$ and $T^*(\delta_{(\xi,y)})(\{(\eta,x)\})\neq 0$. We may then obtain an uncountable set $\{(x_{\lambda},y_{\lambda}):\lambda<\omega_1\}$ of pairwise disjoint points of $L^2\setminus\{(z,z):z\in L\}$ such that $|T^*(\delta_{(\xi,y_{\lambda})})(\{(\eta,x_{\lambda})\})|\neq 0$ for each $\lambda<\omega_1$.

By passing to an uncountable subset of indices if necessary we may assume that there is $\epsilon>0$ such that 
$|T^*(\delta_{(\xi,y_{\lambda})})(\{(\eta,x_{\lambda})\})|\geq\epsilon$ for all $\lambda<\omega_1$.

Since Radon measures in scattered spaces are atomic, for each $\lambda<\omega_1$ we may fix a finite set $G_{\lambda}\subseteq L$ such that 
$x_{\lambda},y_{\lambda}\in G_{\lambda}$ and $$|T^*(\delta_{(\xi,y_{\lambda})})|(\alpha\times (L\setminus G_{\lambda}))<\frac{\epsilon}{4}.$$ 

By applying the $\Delta$-system Lemma we may assume that $\{G_{\lambda}:\lambda<\omega_1\}$ constitutes a $\Delta$-system 
with root $\Delta$ and According to Proposition \ref{oper1}, $(\alpha\times \Delta)\cap \mathrm{supp}(T^*(\delta_{(\xi,y_{\lambda})}))\neq \emptyset$ at most for countably many $\lambda$s. Therefore, without loss of generality, we may assume that $\Delta=\emptyset$.

Since $\alpha$ is first countable, locally compact and zero-dimensional, we may fix a countable local basis $\mathcal{V}_{\eta}$ for $\eta$ consisting of compact clopen sets. By regularity, for each $\lambda<\omega_1$ we may fix $V_{\lambda}\in \mathcal{V}_{\eta}$ such that 
$$|T^*(\delta_{(\xi,y_{\lambda})})|((V_{\lambda}\setminus \{\eta\})\times \{x_{\lambda}\})<\frac{\epsilon}{4}$$
and because $\mathcal{V}_{\eta}$ is countable we may assume that $V_{\lambda}=V$ for all $\lambda<\omega$. 

By passing to a further uncountable subset if necessary we may assume $|G_{\lambda}|=m+2$ for all $\lambda<\omega_1$. We denote 
$G_{\lambda}=\{x_{\lambda},y_{\lambda},a^{\lambda}_1\ldots,a^{\lambda}_m\}$ and form
$$W=\{((x_{\lambda}),(a^{\lambda}_1\ldots,a^{\lambda}_m),(y_{\lambda})):\lambda<\omega_1\}$$ 
which is an uncountable set consisting of pairwise disjoint points of $K(1,m,1)$. Since $L$ is $3$-collapsed, $W$ admits an accumulation point \[u=(z,w,\ldots,w,\infty)\in \Delta(1,m,1).\] 

Let $\{((x_{\lambda_i}),(a^{\lambda_i}_1,\ldots,a^{\lambda_i}_{m}),(y_{\lambda_i}))\}_{i \in I}$ be a net in $W$ converging to $u$ and let $U$ be a clopen neighbourhood of $z$ such that $U\cap\{z,w,\infty\}=\{z\}$. By passing to a subnet if necessary, we may 
assume that $U\cap G_{\lambda_i}=\{x_{\lambda_i}\}$ for each $i \in I$.

Recalling the clopen neighborhood $V$ of $\eta$ fixed before, we may write for each $i \in I$
$$V \times U\setminus \{(\eta,x_{\lambda_{i}})\}\subseteq \beta \times (L \setminus G_{\lambda_i})\cup ((V\setminus \{\eta\}) \times \{x_{\lambda_i}\}).$$

We then have:
\begin{align*}
|T(\chi_{V\times U})(\xi,y_{\lambda_i})|&=|T^*(\delta_{(\xi,y_{\lambda_i})})(V \times U)|\geq |T^*(\delta_{(\xi,y_{\lambda_i})})(\{(\eta,x_{\lambda_i})\})|\\
&-|T^*(\delta_{(\xi,y_{\lambda_i})})|(V\times U \setminus \{(\eta,x_{\lambda_i})\})\\
&\geq |T^*(\delta_{(\xi,y_{\lambda_i})})(\{(\eta,x_{\lambda_i})\})|-|T^*(\delta_{(\xi,y_{\lambda_i})})|(\alpha\times(L\setminus G_{\lambda_i}))\\
&-|T^*(\delta_{(\xi,y_{\lambda_i})})|((V\setminus \{\eta\}) \times \{x_{\lambda_i}\})\\
&>\epsilon-\frac{\epsilon}{4}-\frac{\epsilon}{4}=\frac{\epsilon}{2}.
\end{align*}
Since $(y_{\lambda_i})_{i \in I}$ converges to $\infty$, from the previous relation we deduce that 
$$\lim_{i\to \infty}|T(\chi_{V\times U})(\xi,y_{\lambda_i})|=|T(\chi_{U\times V})(\xi,\infty)|\geq \frac{\epsilon}{2}$$ 
and this is a contradiction because $T(\chi_{V\times U})(\xi,\infty)=0$.

We deduce that for each $\eta\in\alpha$ and $\xi\in \beta$, there is a countable subset $B_{\eta,\xi}\subset L$, 
such that, for each $x,y\in B_{\eta,\xi}$, with $x \neq y$ and $T^*(\delta_{(\xi,y)})(\{(\eta,x)\})=0$. We are done by fixing the closed and countable set
\[B=\overline{\bigcup_{\substack{\eta\in \alpha\\ \xi\in\beta}}B_{\eta,\xi}}.\]
\end{proof}

We are now in position of proving two of our main results.

\begin{proof}[Proof of Theorem \ref{main1}]

From the Propositions \ref{oper3} and \ref{oper4}, there is a function $r:[0,\alpha]\times [0,\beta]\to \mathbb{R}$
and a closed and countable set $A\subset L$, such that for each $x,y\in L\setminus A$
\begin{displaymath}
T^*(\delta_{(\xi,y)})(\{(\eta,y)\})= \left\{
\begin{array}{ll}
r(\eta,\xi) & \text{ if }x=y;\\
0 & \text{ if }x\neq y.
\end{array} \right.
\end{displaymath}

%
%By Proposition \ref{oper3}, for any $\eta \in [0,\alpha]$ and $\xi\in [0,\beta]$ there is 
%$r_{\xi, \eta}\in \mathbb{R}$ and a countable set $A_{\xi, \eta}\subset L$ such that $$T^*(\delta_{(\xi,y)})(\{(\eta,y)\})=r_{\xi, \eta}$$ 
%whenever $y\in L\setminus A_{\xi, \eta}$.
%
%We define the function $\psi:[0,\alpha]\times [0,\beta]\to \mathbb{R}$ by setting $\psi(\eta,\xi)=r_{\xi,\eta}$ for each 
%$(\eta,\xi)\in [0,\alpha]\times [0,\beta]$. We consider
%$$A=\overline{\bigcup_{(\eta,\xi)\in[0, \alpha]\times [0,\beta]}A_{\eta,\xi}}$$
%and since $A$ is countable we may 

Let $y_0$ in $L\setminus A$ be an isolated point. For each $h \in C_{0}(\alpha)$ and $\xi\leq \beta$ we have: 
\begin{align*}
T(h\otimes \chi_{\{y_0\}})(\xi,y_0)&=\int (h\otimes \chi_{\{y_0\}})d T^*(\delta_{(\xi,y_0)})\\
                                           &= \sum_{\eta \in \alpha} h(\eta)\cdot T^*(\delta_{(\xi,y_0)})(\{(\eta,y_0)\})= 
                                            \sum_{\eta \in \alpha} r(\eta,\xi)\cdot h(\eta).
\end{align*}
We define $R:C_0(\alpha)\to C_0(\beta)$ by setting: 
$$R(h)(\xi)=T(h\otimes\chi_{\{y_0\}})(\xi,y_0)=\sum_{\eta \in \alpha} r(\eta,\xi)\cdot h(\eta).$$ 

Because $T$ is bounded and continuous, $R$ is a well defined operator. Recalling the operator $R_L$
from Remark \ref{operatorRL} we have:
\begin{align*}
R_L(f)(\xi,y)&= R(f\restriction_{[0,\alpha]\times \{y\}})(\xi)\\
                    &=T(f\restriction_{[0,\alpha] \times \{y\}}\otimes \chi_{\{y_0\}})(\xi,y_0)= \sum_{\eta \in \alpha} 
                    r(\eta,\xi)\cdot f(\eta,y).
\end{align*}

We will prove next that the operator $S=T-R_L$ has separable image. Since $L \in \mathcal{S}$, we may fix a continuous finite-to-one surjection $\varphi:L\to [0,\omega_1)$. The collection $\{L_\lambda:\lambda<\omega_1\}$, where $L_{\lambda}=\varphi^{-1}[[0,\lambda]]$, constitutes a clopen cover for $L$ and since $A$ is countable, it is contained in some $L_{\lambda_0}$.

For each $\lambda <\omega_1$ we consider the set
\[C_{\lambda}=\{g \in C_0(\beta\times L): g(\xi,y)=0\text{ for all }(\xi,y)\in \beta\times (L\setminus L_{\lambda})\}.\]
Each $C_\lambda$ is isomorphic to $C_0(\alpha \times L_\lambda)$, therefore is separable. Towards a contradiction we assume 
that for each $\lambda<\omega_1$, $S[C_0(\alpha\times L)]\not\subset C_{\lambda}$, that is, there is 
$f_{\lambda}\in C_0(\alpha \times L)$, such that $S(f_{\lambda})\notin C_{\lambda}$. 

Then, for each $\lambda<\omega_1$, there is $(\xi_{\lambda},y_{\lambda})\in \beta\times (L\setminus L_{\lambda})$ such that $S(f_{\lambda})(\xi_{\lambda},y_{\lambda})\neq 0.$ Without loss of generality, we may assume that $y_\lambda\in L\setminus A$ for all $\lambda<\omega_1$; $y_{\lambda_1}\neq y_{\lambda_2}$ whenever $\lambda_1\neq \lambda_2$ and $\xi_{\lambda}=\xi$ for all $\lambda$. Since 
\begin{align*}S (f_{\lambda})(\xi,y_{\lambda})=\int f_{\lambda} dS^*(\delta_{(\xi,y_{\lambda})})
                                              =\sum_{\substack{\eta\in \alpha\\ x\in L}} f_{\lambda}(\eta,x)S^*(\delta_{(\xi,y_{\lambda})})(\{(\eta,x)\})\neq 0,
\end{align*} 
we may take for each $\lambda<\omega_1$, $(\eta_{\lambda},x_{\lambda})\in \alpha\times L$ such that $S^*(\delta_{(\xi,y_{\lambda})})(\{(\eta_{\lambda},x_{\lambda})\})\neq 0.$

Recalling Proposition \ref{oper1}, we may assume that $x_\lambda\in L\setminus A$ for all $\lambda<\omega_1$, $x_{\lambda_1}\neq x_{\lambda_2}$ if $\lambda_1\neq \lambda_2$ and $\eta_{\lambda}=\eta$ 
for all $\lambda$. 

If $x_\lambda\neq y_\lambda$, then  $S^*(\delta_{(\xi,y_{\lambda})})(\{(\eta,x_{\lambda})\})=0$. Therefore  
$x_\lambda=y_\lambda=y$ for each $\lambda$. But then
\begin{align*}
S^*(\delta_{(\xi,y)})(\{(\eta,y)\})&=T^*(\delta_{(\xi,y)})(\{(\eta,y)\})
-R_L^*(\delta_{(\xi,y)})(\{(\eta,y)\})\\  
&=r(\eta,\xi)-r(\eta,\xi)=0
\end{align*}
which is a contradiction. We deduce that there is $\lambda<\omega_1$, such that  $S[C_0(\alpha\times L)]\subset C_{\lambda}$, therefore, S has separable image.

To establish the uniqueness of the decomposition, assume that there is an operator $R':C_0(\alpha)\to C_0(\beta)$ and an operator $S':C_0(\alpha\times L)\to C_0(\beta\times L)$ with separable image such that
$T=R'_L+S'$. 

Then $R'_L- R_L=(R'-R)_L=S'-S$ has separable image. By Proposition \ref{oper8}, $R'=R$. Therefore $S'=S$.
\end{proof}

\begin{proof}[Proof of Theorem \ref{main2}]
If $C(M_1)\sim C(M_2)$, since $C(M_1\times K)$ is isometric to $C(M_1)\widehat{\otimes}_{\varepsilon}C(K)$ and $C(M_2\times K)$ is isometric to $C(M_2)\widehat{\otimes}_{\varepsilon}C(K)$, 
it follows that $C(M_1\times K)\sim C(M_2\times K)$. On the other hand, assume that $C(M_1\times K)\sim C(M_2\times K)$. 
If $M_1$ is uncountable, according to Miljutin theorem, $C(M_1)$ is isomorphic to $C([0,1])$ then  $C([0,1]\times K)\sim C(M_2\times K)$. Since $[0,1]\times K$ is not a scattered compact space, then $M_2\times K$ is also not scattered, see \cite[Theorem 1.5]{Candido}. We deduce that $M_2$ is an uncountable metric compacta, see \cite[Proposition 8.6.10]{Se}. 
By Mijutin theorem, $C(M_2)\sim C([0,1])$ and we are done.

Assume now that $M_1$ and $M_2$ are countable. Then, according to a result of Mazurkiewicz and Sierpi\'nski, 
see \cite{MS} or \cite[Teorema 8.6.10]{Se}, there are countable ordinals $\alpha$ and $\beta$ such that
$C(M_1)\sim C([0,\alpha])$ and $C(M_2)\sim C([0,\beta])$. Since $L$ is scattered, we deduce that 
$$C_0(\alpha\times L)\sim C([0,\alpha]\times K)\sim C([0,\beta]\times K) \sim C_0(\beta\times L).$$

Let $T:C_0(\alpha\times L)\to C_0(\beta\times L)$ be an isomorphism. By Theorem \ref{main1}, there is an operator 
$R:C_0(\alpha)\to C_0(\beta)$ and an operator $S:C_0(\alpha\times L)\to C_0(\beta\times L)$ of separable image, such that $T=R_L+S$. 
Moreover, by Theorem \ref{oper9}, $R$ is an isomorphism. Therefore, $$C(M_1)\sim C_0(\alpha)\sim C_0(\beta)\sim C(M_2).$$
\end{proof}

%%%%%%%%%%%%%%%%%%%%%%%%%%%%%%%%%%%%%%%%%%%%%%%%%%%%%%%%%%%%%%%%%%%%%%%%%%%%%%%%%%%%%%%%%%%%%%%%%%%%%%%
%%%%%%%%%%%%%%%%%%%%%%%%%%%%%%%%%%%%%%%%%%%%%%%%%%%%%%%%%%%%%%%%%%%%%%%%%%%%%%%%%%%%%%%%%%%%%%%%%%%%%%%
%%%%%%%%%%%%%%%%%%%%%%%%%%%%%%%%%%%%%%%%%%%%%%%%%%%%%%%%%%%%%%%%%%%%%%%%%%%%%%%%%%%%%%%%%%%%%%%%%%%%%%%
%%%%%%%%%%%%%%%%%%%%%%%%%%%%%%%%%%%%%%%%%%%%%%%%%%%%%%%%%%%%%%%%%%%%%%%%%%%%%%%%%%%%%%%%%%%%%%%%%%%%%%%

\section{The geometry of $C_0(\alpha\times L)$}
\label{Geo}

In this section, we will investigate the geometry of the space $C_0(\alpha\times L)$ when $\alpha$ is a countable ordinal and $L$ is an element of the class $\mathcal{S}$, defined in  Section \ref{div}, such that all operators $T:C_0(\alpha\times L)\to C_0(\alpha\times L)$ can be decomposed as 
\begin{equation}\label{decomp}T=R_L+S
\end{equation}
where $R_L$ is an operator induced by a operator $R:C_0(\alpha)\to C_0(\alpha)$ as in Remark \ref{operatorRL}, and $S:C_0(\alpha\times L)\to C_0(\alpha\times L)$ is an operator with separable range.

In the first result of this section we show that each operator of the form (\ref{decomp}) collapses 
any copy of $c_0(\omega_1)$ into a separable subspace.

\begin{proposition}\label{geom111}
Let $T:C_0(\alpha\times L)\to C_0(\alpha\times L)$ be an operator and let $Y\subset C_0(\alpha\times L)$ be a subspace isomorphic to $c_0(\omega_1)$. If $T[Y]$ is non-separable, then $T$ that cannot be decomposed as in (\ref{decomp}).
\end{proposition}
\begin{proof}
Towards a contradiction, suppose that there is an operator $R:C_0(\alpha)\to C_0(\alpha)$ and an operator with separable image $S:C_0(\alpha\times L)\to C_0(\alpha\times L)$ such that $T=R_L+S$. Suppose that there is a subspace $Y\subset C_0(\alpha\times L)$ such that 
$Y\sim c_0(\omega_1)$ and $T[Y]$ is non-separable. Let $\mathcal{C}\subset Y$ be a family equivalent to the usual unit vector 
basis of $c_0(\omega_1)$, that is, $|\mathcal{C}|=\omega_1$ and there is $M>0$ such that,
for all finite subsets $F\subset \mathcal{C}$, for all scalars $(a_g)_{g\in F}$,
\[M\max_{g\in F}\{|a_g|\}\leq \|\sum_{g\in F}a_g g\|\leq \max_{g\in F}\{|a_g|\}.\]

Because $T[Y]$ is non-separable, we may assume that $T(g)\neq 0$ for each $g\in \mathcal{C}$, furthermore, by passing to an a uncountable subset if necessary, we may suppose that there is $\delta>0$ such that $\|T(g)\|\geq \delta$ for each $g\in \mathcal{C}$. On the other hand, since $S[C_0(\alpha\times L)]$ has separable image, we may assume without lost of generality that $S(g)=0$ for each $g\in \mathcal{C}$. For otherwise, there is $\delta>0$ such that $\|S(g)\|\geq \delta$ for uncountably many $g\in \mathcal{C}$. According to a result of H. Rosenthal, \cite[Remark following Theorem 3.4]{Rose1}, $S[C_0(\alpha\times L)]$ has a subspace isomorphic to $c_0(\omega_1)$, a contradiction.

Next, we fix $e_0$ any element of $\mathcal{C}$ and assume that for an arbitrary ordinal $\xi<\omega_1$ we have obtained 
$\{e_\eta:\eta<\xi\}\subset \mathcal{C}$ such that $\{T(e_\eta):\eta<\xi\}$ is a collection of functions with pairwise disjoint supports.
 
From Proposition \ref{countablesupport} we know that the set $\bigcup_{\eta<\xi} \mathrm{supp}(T(e_\eta))$ is countable. There is $g\in \mathcal{C}$ such that \[\mathrm{supp}(T(g))\cap \left(\bigcup_{\eta<\xi} \mathrm{supp}(T(e_\eta))\right)=\emptyset.\]
For otherwise, there is $x\in \bigcup_{\eta<\xi} \mathrm{supp}(T(e_\eta))$, $\epsilon>0$ and an uncountable
subset $\{g_\gamma:\gamma<\omega_1\}\subset \mathcal{C}$ such that $|T(g_\gamma)(x)|\geq \epsilon$ for each $\gamma<\omega_1$. 
Then, for each finite subset $F\subset \omega_1$,
\begin{align*}
\epsilon\cdot |F|&\leq |\sum_{\gamma\in F} \mathrm{sign}(T(g_\gamma)(x))\cdot T(g_\gamma)(x)|=|T(\sum_{\gamma\in F} \mathrm{sign}(T(g_\gamma)(x))\cdot g_\gamma)(x)|\\
&\leq \|T(\sum_{\gamma\in F} \mathrm{sign}(T(g_\gamma)(x))\cdot g_\gamma)\|\leq \|T\|\|\sum_{\gamma\in F} \mathrm{sign}(T(g_\gamma)(x))\cdot g_\gamma\|\leq \|T\|
\end{align*}
which is a contradiction. We then fix $e_{\xi}=g$ and obtain this way recursively a family $\{e_\xi:\xi<\omega_1\}\subset \mathcal{C}$ such that 
$\{T(e_\xi):\xi<\omega_1\}$ is a collection consisting of pairwise disjointed supported functions.

Now, for each $\xi<\omega$ let $(x_\xi,y_\xi)\in \alpha\times L$ such that 
$|T(e_\xi)(x_\xi,y_\xi)|=\|T(e_\xi)\|$. Then, for each $\xi<\omega_1$, 
\begin{align*}
\delta &\leq \|T(e_\xi)\|= |T(e_\xi)(x_\xi,y_\xi)|=|R_L(e_\xi)(x_\xi,y_\xi)|\\
&=|R(e_\xi\restriction_{\alpha\times \{y_\xi\}})(x_\xi)|\leq\|R(e_\xi\restriction_{\alpha\times \{y_\xi\}})\|
\end{align*}

Because $C_0(\alpha)$ is separable, there are $\xi,\eta<\omega_1$ such that 
\[\|R(e_\xi\restriction_{\alpha\times \{y_\xi\}})-R(e_\eta\restriction_{\alpha\times \{y_\eta\}})\|<\delta.\]
And because $T(e_\xi)$ and $T(e_\eta)$ have disjoint supports
\begin{align*}
\delta &\leq |T(e_\xi)(x_\xi,y_\xi)|=|T(e_\xi)(x_\xi,y_\xi)-T(e_\eta)(x_\xi,y_\eta)|\\
&=|R(e_\xi\restriction_{\alpha\times \{y_\xi\}})(x_\xi)-R(e_\eta\restriction_{\alpha\times \{y_\eta\}})(x_\xi)|\\
&\leq\|R(e_\xi\restriction_{\alpha\times \{y_\xi\}})-R(e_\eta\restriction_{\alpha\times \{y_\eta\}})\|<\delta,
\end{align*}
a contradiction that establishes the proposition. We conclude that no such operator can be of the form $R_L+S$.
\end{proof}

\begin{remark}\label{geom2222} 
Since $|L\setminus L^{(1)}|=\omega_1$, $C_0(\alpha\times L)$ has many isometric copies of $c_0(\omega_1)$. It follows from Proposition \ref{geom111} that an operator of the form $R_L+S$ collapses each of theses copies into separable subspaces. In particular, if $C_0(\alpha\times L)$ has a complemented subspace $Y\sim c_0(\omega_1)$, the projection of $P:C_0(\alpha\times L) \to Y$ is not as in (\ref{decomp}). We deduce, for example, that there are uncountably many operators on $C_0(\alpha\times \omega_1)$ that are not as in (\ref{decomp}).
\end{remark}

From now on we will always consider $\alpha$ as a fixed countable ordinal space and $L$ 
as a fixed element of $\mathcal{S}$ such that every operator $T:C_0(\alpha\times L)\to C_0(\alpha\times L)$ is 
of the form $T=R_L+S$. We will assume all the notational conventions presented in the beginning of the Section \ref{div}.

\begin{proposition}\label{geom222}
For each $\rho<\omega_1$, $L\setminus L_\rho$ has height $\omega$.
\end{proposition}
\begin{proof}
For otherwise, $L\setminus L_\rho$ has finite height for some $\rho<\omega_1$. Recalling the properties of the family $\{L_\lambda:\lambda<\omega_1\}$, there are $\rho\leq\lambda$ and $n<\omega$ such that $\left(L\setminus L_{\lambda}\right)^{(n+1)}=\emptyset$ and $\left(L\setminus L_{\lambda}\right)^{(n)}$ is uncountable. Moreover, we may fix an uncountable subset $\{z_\xi:\xi<\omega_1\}\subset \left(L\setminus L_{\lambda}\right)^{(n)}$ and an uncountable collection $\{A_\xi:\xi<\omega\}$ constituted of clopen compact subsets of $L\setminus L_{\lambda}$ such that $z_\xi\in A_\xi$ for each $\xi<\omega_1$. 

Let $S:C_0(L\setminus L_{\lambda})\to c_0(\omega_1)$ be given by $S(f)=(f(z_\xi))_{\xi<\omega_1}$. For each 
$\epsilon>0$, because $\{x\in L\setminus L_{\lambda}: |f(x)|\geq \epsilon\}$ is compact and $\{z_\xi:\xi<\omega_1\}$ is discrete, 
the set $\{\xi<\omega_1:|f(z_{\xi})|\geq \epsilon\}$ is finite. We deduce that $S$ is a well defined operator.

On the other hand, we consider the operator $T:c_0(\omega_1)\to C_0(L\setminus L_{\lambda})$ given by the formula
\[T((a_{\xi})_{\xi<\omega_1})(x)=\sum_{\xi<\omega_1}a_{\xi}\cdot\chi_{A_\xi}(x)\]

For each $(a_{\xi})_{\xi<\omega_1}\in c_0(\omega_1)$, 
\begin{align*}
S\circ T((a_{\xi})_{\xi<\omega_1})=\left(T((a_{\xi})_{\xi<\omega_1})(z_\xi)\right)_{\xi<\omega_1}=\left(a_{\xi}\cdot\chi_{A_\xi}(z_\xi)\right)_{\xi<\omega_1}=(a_{\xi})_{\xi<\omega_1}
\end{align*}

Then, $S\circ T=I$ is the identity operator on $c_0(\omega_1)$ and it follows that $P=T\circ S$ is a projection of 
$C_0(L\setminus L_\rho)$ onto a subspace isomorphic to $c_0(\omega_1)$. Since $C_0(L\setminus L_\rho)$ is a complemented subspace of $C_0(\alpha\times L)$ we deduce that the latter space has a complemented subspace isomorphic to $c_0(\omega_1)$. According to Proposition \ref{geom111}, this is a contradiction to our assumption that $T=R_L+S$.

\end{proof}

\begin{proposition}\label{geom1}
For each $\rho<\omega_1$, if $L_{\rho}$ is infinite, then either $C_0(L_{\rho})\sim C_0(\omega)$ or $C_0(L_{\rho})\sim C_0(\omega^{\omega})$. Moreover, there is 
$\rho<\omega_1$ such that $C_0(L_{\rho})\sim C_0(\omega^{\omega})$.
\end{proposition}
\begin{proof}
Let $\rho<\omega_1$ such that $L_{\rho}$ is infinite. Since $K_\rho=L_{\rho}\cup \{\infty\}$ is a countable compactum, by a classical result of Mazurkie\-wicz-Sierpi\'nski \cite[Theorem 8.6.10]{Se}, $K_{\rho}$ is homeomorphic to an ordinal space 
$[0,\omega^\gamma n]$, where $n <\omega$ and $\gamma<\omega_1$. Since $L$ has height $\omega$, the subspace $L_{\rho}$ has height at most $\omega$. Thus, $\gamma\leq \omega$ and we deduce that  $C_0(L_{\rho})$ is isomorphic to a complemented subspace of $C_0(\omega^{\omega})$. Then, by \cite[Corollary 5.10]{Rose}, $C_0(L_{\rho})$ is either isomorphic to $C_0(\omega)$ or $C_0(\omega^{\omega})$.

Because  $L$ has height $\omega$, for each $n<\omega$ there is $a_n\in  L^{(n)}$. Since $L=\bigcup_{\lambda<\omega_1} L_\lambda$ and
$L_{\lambda_1} \subset L_{\lambda_2}$ whenever $\lambda_1\leq \lambda_2$, there is $\rho<\omega_1$ such that 
$\{a_n:n<\omega\}\subset L_{\rho}$. Since $L_{\rho}$ is a open set,  $a_n\in L_{\rho}^{(n)}$ for each $n<\omega$, and therefore $L_{\rho}$ has height $\omega$. From the first part of the proof we deduce that $C_0(L_{\rho})\sim C_0(\omega^{\omega})$
\end{proof}

\begin{proposition}\label{geom3}For each $\rho<\omega_1$, $C_0(L\setminus L_{\rho})$ is isomorphic to $C_0(L)$. 
\end{proposition}
\begin{proof}
Let $\rho<\omega_1$ be an arbitrary ordinal. According to Proposition \ref{geom222}, $L\setminus L_\rho$ has height $\omega$. By mimicking the second part of Proposition \ref{geom1} we may fix $\rho<\lambda$ such that $C_0(L_{\lambda}\setminus L_{\rho})\sim C_0(\omega^\omega)$.  Since, by the first part of Proposition \ref{geom1}, $C_0(L_{\rho})$ isomorphic to a complemented subspace of $C_0(\omega^\omega)$ we have
\begin{align*}
C_0(L)\sim & C_0(L_{\rho})\oplus C_0(L_{\lambda}\setminus L_{\rho})\oplus C_0(L\setminus L_{\lambda})\sim  C_0(L_{\rho})\oplus C_0(\omega^\omega)\oplus C_0(L\setminus L_{\lambda})\\
      \sim & C_0(\omega^\omega)\oplus C_0(L\setminus L_{\lambda})\sim C_0(L_{\lambda}\setminus L_{\rho})\oplus C_0(L\setminus L_{\lambda}) \sim C_0(L\setminus L_{\rho}).
\end{align*}
\end{proof}

\begin{proposition}\label{geom5}
Every separable complemented subspace of $C_0(\alpha\times L)$ is isomorphic 
to a complemented subspace of $C_0(\alpha)\oplus C_0(\omega^{\omega})$.
\end{proposition}
\begin{proof}

Let $X$ be a separable complemented subspace of $C_0(\alpha\times L)$. By applying the Proposition \ref{oper7} we may deduce that $X$ is complemented in $A_{\rho}(\alpha)$ for some $\rho<\omega_1$. By the second part of the proof of Proposition \ref{geom1} we may assume that $C_0(L_\rho)\sim c_0(\omega^\omega)$. The Proposition will be established by proving that either $A_{\rho}(\alpha)\sim C_0(\omega^\omega)$ or $A_{\rho}(\alpha)\sim C_0(\alpha)$.

 Recalling that $A_{\rho}(\alpha)\cong C_0(\alpha)\widehat{\otimes}_{\varepsilon} C_0(L_\rho)$, we have
\[A_{\rho}(\alpha)\sim C_0(\alpha)\widehat{\otimes}_{\varepsilon} C_0(\omega^\omega)\sim C_0(\alpha\times \omega^\omega).\]

If $\alpha<\omega$, then $A_{\rho}(\alpha)\sim C_0(\omega^\omega)$. If $\omega\leq\alpha<\omega_1$, we fix $\gamma=\sup\{\eta:\omega^{\omega^\eta}\leq \alpha\}$ and observe that $\omega^{\omega^\gamma}\leq \alpha<\omega^{\omega^{\gamma+1}}$. According to the Bessaga and Pe{\l}czy\'nski classification theorem, see \cite{BP}, we have $C_0(\alpha)\sim C_0(\omega^{\omega^\gamma})$. Consequently
\[A_{\rho}(\alpha)\sim C_0(\omega^{\omega^\gamma}\times \omega^\omega).\]
According to \cite[Lemma 2.4]{Alspash}, the height of $[0,\omega^{\omega^{\gamma}}] \times [0,\omega^{\omega}]$ is $\omega^{\gamma}+\omega+1$. Then, by Mazurkiewicz-Sierpi\'nski theorem \cite[Theorem 8.6.10]{Se}, $[0,\omega^{\omega^{\gamma}}] \times [0,\omega^\omega]$ is homeomorphic to the ordinal space $[0,\omega^{\omega^{\gamma}+\omega}p]$ for some $0<p<\omega$. Therefore, 
$C_0(\omega^{\omega^{\gamma}}\times \omega^{\omega})\sim C_0(\omega^{\omega^{\gamma}+\omega}p)\sim C_0(\omega^{\omega^{\gamma}+\omega})$.

If $\alpha<\omega^\omega$, then $\gamma=0$ and $A_{\rho}(\alpha)\sim C_0(\omega^{\omega})$. 
If $\alpha\geq \omega^\omega$, then $\gamma>0$ and $\omega^{\omega^{\gamma}}\leq \omega^{\omega^{\gamma}+\omega} < 
\omega^{\omega^{\gamma+1}}$. According to Bessaga and Pe{\l}czy\'nski classification theorem we have 
$A_{\rho}(\alpha)\sim C_0(\omega^{\omega^{\gamma}})\sim C_0(\alpha)$.

\end{proof}

\begin{proposition}\label{geom9}Let $P:C_0(\alpha\times L)\to C_0(\alpha\times L)$ be an projection. Then, there is a projection $R:C_0(\alpha)\to C_0(\alpha)$ and $\rho<\omega_1$ such that $P[C_0(\alpha\times L)]=\overline{R_L[B_\rho(\alpha)]}\oplus\overline{P[A_\rho(\alpha)]}$   
\end{proposition}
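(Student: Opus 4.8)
The plan is to decompose $P$ via Theorem \ref{main1} and then localize the separable error term against the clopen splitting $C_0(\alpha\times L)=A_\rho(\alpha)\oplus B_\rho(\alpha)$. First I would write $P=R_L+S$ with $R:C_0(\alpha)\to C_0(\alpha)$ bounded and $S$ of separable image (Theorem \ref{main1}); since $P^2=P$, Proposition \ref{geom7} gives that $R$ is a projection, which already produces the operator $R$ required by the statement. The crucial structural observation is that $R_L$ respects the splitting for every $\rho$: if $f$ vanishes on $\alpha\times L_\rho$ then $f\restriction_{[0,\alpha]\times\{y\}}=0$ for each $y\in L_\rho$, whence $R_L(f)(\cdot,y)=R(0)=0$ and so $R_L[B_\rho(\alpha)]\subseteq B_\rho(\alpha)$; symmetrically $R_L[A_\rho(\alpha)]\subseteq A_\rho(\alpha)$.

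Next I would choose a single $\rho$ that absorbs $S$ on both sides. Proposition \ref{geom8} yields $\rho_0<\omega_1$ with $S[B_{\rho_0}(\alpha)]=\{0\}$, while Proposition \ref{oper7} applied to the closed separable subspace $\overline{S[C_0(\alpha\times L)]}$ yields a countable $A\subseteq L$ off which every function in the image vanishes. Picking $\rho\geq\rho_0$ with $A\subseteq L_\rho$ secures simultaneously $S\restriction_{B_\rho(\alpha)}=0$ and $S[C_0(\alpha\times L)]\subseteq A_\rho(\alpha)$. With this $\rho$ two clean facts follow: on $B_\rho(\alpha)$ we have $P(f)=R_L(f)+S(f)=R_L(f)$, so $P[B_\rho(\alpha)]=R_L[B_\rho(\alpha)]\subseteq B_\rho(\alpha)$; and on $A_\rho(\alpha)$ both summands of $P=R_L+S$ land in $A_\rho(\alpha)$, whence $P[A_\rho(\alpha)]\subseteq A_\rho(\alpha)$.

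Then I would assemble the decomposition. Since $C_0(\alpha\times L)=A_\rho(\alpha)\oplus B_\rho(\alpha)$ and $P$ is linear,
$$P[C_0(\alpha\times L)]=P[A_\rho(\alpha)]+R_L[B_\rho(\alpha)].$$
The range of a projection is closed, so $P[C_0(\alpha\times L)]$ contains the closures $\overline{P[A_\rho(\alpha)]}$ and $\overline{R_L[B_\rho(\alpha)]}$, hence their sum; combined with the displayed identity this forces $P[C_0(\alpha\times L)]=\overline{P[A_\rho(\alpha)]}+\overline{R_L[B_\rho(\alpha)]}$. Directness is then immediate from the localization: as $A_\rho(\alpha)$ and $B_\rho(\alpha)$ are closed, $\overline{P[A_\rho(\alpha)]}\subseteq A_\rho(\alpha)$ and $\overline{R_L[B_\rho(\alpha)]}\subseteq B_\rho(\alpha)$, so their intersection lies in $A_\rho(\alpha)\cap B_\rho(\alpha)=\{0\}$. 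This yields $P[C_0(\alpha\times L)]=\overline{R_L[B_\rho(\alpha)]}\oplus\overline{P[A_\rho(\alpha)]}$.

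The main obstacle I anticipate is the simultaneous localization of $S$ in the second step: the entire argument hinges on extracting one and the same $\rho$ for which $S$ both annihilates $B_\rho(\alpha)$ and carries $C_0(\alpha\times L)$ into $A_\rho(\alpha)$. Once that is secured, the invariance of the splitting under $R_L$ makes both the spanning and the directness essentially automatic, the latter being nothing more than $A_\rho(\alpha)\cap B_\rho(\alpha)=\{0\}$.
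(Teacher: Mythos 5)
Your argument is correct and follows essentially the same route as the paper: decompose $P=R_L+S$ via Theorem \ref{main1}, apply Proposition \ref{geom7} to see $R$ is a projection, localize $S$ so that it kills $B_\rho(\alpha)$ and maps into $A_\rho(\alpha)$, and then read off the direct sum from the invariance of the splitting $A_\rho(\alpha)\oplus B_\rho(\alpha)$. The only cosmetic difference is that you invoke Proposition \ref{oper7} explicitly to get $S[C_0(\alpha\times L)]\subseteq A_\rho(\alpha)$, whereas the paper folds that containment into its appeal to Proposition \ref{geom8}; your phrasing of directness via $R_L$-invariance of the two summands is the same fact the paper verifies pointwise.
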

\begin{proof}
According to Proposition \ref{main1} there is a unique operator $R:C_0(\alpha)\to C_0(\alpha)$ and a unique operator with separable image $S:C_0(\alpha\times L)\to C_0(\alpha\times L)$ such that $P=R_L+S$. According to Proposition \ref{geom7}, $R:C_0(\alpha)\to C_0(\alpha)$ is also projection.

By applying Proposition \ref{geom8} we may pick $\rho \in L$ such that $S[C_0(\alpha\times L)]\subseteq A_\rho(\alpha)$ and
$S[B_\rho(\alpha)]$ is the null subspace.

Since $P[C_0(\alpha\times L)]$ is closed, it follows that 
\begin{align*}P[C_0(\alpha\times L)]=& P[A_\rho(\alpha)\oplus B_\rho(\alpha)]\\
                                    =& R_L[B_\rho(\alpha)]+P[A_\rho(\alpha)]=\overline{R_L[B_\rho(\alpha)]}+
																		\overline{P[A_\rho(\alpha)]}
 \end{align*}
Now we check that $\overline{R_L[B_\rho(\alpha)]}\cap\overline{P[A_\rho(\alpha)]}$ is the null space.
If $g \in \overline{R_L[B_\rho(\alpha)]}$, there is a sequence $(f_n)_{n}$ in $B_\rho(\alpha)$ such that 
$R_L(f_n)$ converges to $g$. If $x \in \alpha$ and $y\in L_{\rho}$,
\begin{align*}
g(x,y)=\lim_{n \to \infty}R_L(f_n)(x,y)=\lim_{n \to \infty}R(f_n\restriction_{[0,\alpha]\times \{y\}})(x)=0.
\end{align*}
If $g \in \overline{P[A_\rho(\alpha)]}$, there is a sequence $(h_n)_{n}$ in $A_\rho(\alpha)$ such that 
$P(h_n)$ converges to $g$. If $x \in \alpha$ and $y\in L\setminus L_{\rho}$,
\begin{align*}
g(x,y)=\lim_{n \to \infty}P(h_n)(x,y)=\lim_{n \to \infty}\left(R(h_n\restriction_{[0,\alpha]\times \{y\}})(x)+ S(h_n)(x,y)\right)=0.
\end{align*}
Therefore, if $g\in \overline{R_L[B_\rho(\alpha)]}\cap\overline{P[A_\rho(\alpha)]}$, then $g =0$.

\end{proof}

\begin{proof}[Proof of Theorems \ref{main3}]
By Proposition \ref{geom1}, we may pick $\rho<\omega_1$ such that $C_0(L_{\rho})$ is isomorphic to $C_0(\omega^\omega)$. Moreover, by Proposition \ref{geom3}, 
\[C_0(L)\sim C_0(L_{\rho})\oplus C_0(L\setminus L_{\rho})\sim C_0(\omega^\omega)\oplus C_0(L).\]

We deduce that 
\[C_0(\alpha\times L)\sim C_0(\alpha\times L)\oplus C_0(\alpha\times\omega^\omega).\] 

If $A$ is a complemented subspace of $C_0(\alpha)$ and $B$ is a complemented subspace of $C_0(\alpha)\oplus C_0(\omega^\omega)$, it is evident from the previous relation that $(A\widehat{\otimes}_{\varepsilon}C(K))\oplus B$ is isomorphic to a complemented subspace $C_0(\alpha\times L)$.

On the other hand, let $X$ be an infinite dimensional complemented subspace of $C_0(\alpha\times L)$ and let $P: C_0(\alpha\times L)\to C_0(\alpha\times L)$ be a projection such that $P[C_0(\alpha\times L)]=X$. 
According to Proposition \ref{geom9}, there is a projection $R:C_0(\alpha)\to C_0(\alpha)$ and $\rho<\omega_1$ such that $X=P[C_0(\alpha\times L)]=\overline{R_L[B_\rho(\alpha)]}\oplus \overline{P[A_\rho(\alpha)]}$.

It follows that $\overline{P[A_\rho(\alpha)]}$ is a separable complemented subspace of $C_0(\alpha\times L)$. Hence, by Proposition \ref{geom5}, $\overline{P[A_\rho(\alpha)]}$ is isomorphic to a complemented subspace of $C_0(\alpha)\oplus C_0(\omega^{\omega})$. From Propositions \ref{geom6} and \ref{geom3} we deduce 
$\overline{R_L[B_\rho(\alpha)]}\sim R[C_0(\alpha)]\widehat{\otimes}_{\varepsilon} C_0(L)$. 

Therefore, $X\sim (A \widehat{\otimes}_{\varepsilon} C_0(L)) \oplus B$ where $A$ is a complemented subspace of $C_0(\alpha)$ and 
$B$ is a complemented subspace of $C_0(\alpha)\oplus C_0(\omega^{\omega})$. 
\end{proof}

\section{Acknowledgements}

The author is greatly indebted with Prof. Piotr Koszmider from Institute of Mathematics of the Polish Academy of Sciences for suggestions and help. 

%%%%%%%%%%%%%%%%%%%%%%%%%%%%%%%%%%%%%%%%%%%%%%%%%%%%%%%%
%%%%%%%%%%%%%%%%%%%%%%%%%%%%%%%%%%%%%%%%%%%%%%%%%%%%%%%%
%%%%%%%%%%%%%%%%%%%%%%%%%%%%%%%%%%%%%%%%%%%%%%%%%%%%%%%%

\bibliographystyle{amsalpha}

\end{document}